\definecolor{myorange}{RGB}{180,90,0}
\definecolor{mygreen}{RGB}{70,140,0}
\def\wrtext#1{\relax\ifmmode{\leavevmode\hbox{#1}}\else{#1}\fi}
\def\abs#1{\left|#1\right|}
\def\begeq{\begin{equation}}
\def\endeq{\end{equation}}
\def\part#1{\frac{\partial}{\partial #1}}
\def\norm#1{||\,#1\,||}
\newcommand{\real}{\mbox{\bf R}}
\newcommand{\comp}{\mbox{\bf C}}
\renewcommand{\exp}{\mbox{\rm exp\,}}
\newtheorem{dref}{Definition}[section]
\newtheorem{lemma}[dref]{Lemma}
\newtheorem{theo}[dref]{Theorem}
\newtheorem{prop}[dref]{Proposition}
\newenvironment{proof}{\vspace{.3cm}\noindent{{\em Proof:}}}{\hfill$\Box$}
\title{Characterizing boundedness of metaplectic Toeplitz operators}
\author{Lewis \textsc{Coburn} \footnote{Department of Mathematics, SUNY at Buffalo, Buffalo, NY 14260, USA, {\sf lcoburn@buffalo.edu}}\and Michael \textsc{Hitrik} \footnote{Department of Mathematics, UCLA, Los Angeles CA 90095-1555, USA, {\sf hitrik@math.ucla.edu}} \and \and Johannes \textsc{Sj\"ostrand}\footnote{IMB, Universit\'e de Bourgogne 9, Av. A. Savary, BP 47870
FR-21078 Dijon, France and UMR 5584 CNRS, {\sf johannes.sjostrand@u-bourgogne.fr}}}
\date{}
\begin{document}
\maketitle

\vspace*{1cm}
\noindent
{\bf Abstract}: We study Toeplitz operators on the Bargmann space, with Toeplitz symbols given by exponentials of complex quadratic forms. We show that the boundedness of the corresponding Weyl symbols is necessary for the boundedness of the operators, thereby completing the proof of the Berger-Coburn conjecture in this case. We also show that the compactness of such Toeplitz operators is equivalent to the vanishing of their Weyl symbols at infinity.

\tableofcontents
\section{Introduction and statement of results}
\setcounter{equation}{0}
\label{sec_introduction}
The Berger-Coburn conjecture~\cite{BC94},~\cite{LC19}, a long standing conjecture in the theory of Toeplitz operators, states that a Toeplitz operator on the Bargmann space is bounded precisely when its Weyl symbol is bounded. Important partial results towards the conjecture have been obtained in~\cite{BC94}. In our recent work~\cite{CoHiSj}, certain links have been established between the theory of Toeplitz operators on the Bargmann space and Fourier integral operators (FIOs) in the complex domain. The point of view of complex FIOs has been used in~\cite{CoHiSj} to show the sufficiency part of the conjecture, in the special case of Toeplitz symbols given by exponentials of complex quadratic forms. An extension of this result to the case of Toeplitz symbols that are exponentials of inhomogeneous quadratic polynomials has been obtained in the follow up paper~\cite{CoHiSjWh}, still relying on the FIO point of view. The necessity part of the Berger-Coburn conjecture for such metaplectic Toeplitz operators has been left open in~\cite{CoHiSj},~\cite{CoHiSjWh}, and it is the purpose of this note to finally settle it, showing that the boundedness of the Weyl symbols is necessary for the boundedness of the corresponding Toeplitz operators. We shall now proceed to describe the assumptions and state the main results of this work.

\bigskip
\noindent
Let $\Phi_0$ be a strictly plurisubharmonic quadratic form on $\comp^n$ and let us set
\begeq
\label{eq1.1}
\Lambda_{\Phi_0} = \left\{\left(x,\frac{2}{i}\frac{\partial \Phi_0}{\partial x}(x)\right), \, x\in \comp^n\right\} \subset \comp^n_x \times \comp^n_{\xi} = \comp^{2n}.
\endeq
The real linear subspace $\Lambda_{\Phi_0}$ is I-Lagrangian and R-symplectic, in the sense that the restriction of the complex symplectic form on $\comp^{2n}$ to $\Lambda_{\Phi_0}$ is real and non-degenerate. We may also recall that in the context of the Weyl quantization in the complex domain, the subspace $\Lambda_{\Phi_0}$ is to be regarded as the real phase space, see \cite{Sj95},~\cite{Zw12}.

\medskip
\noindent
Associated to the quadratic form $\Phi_0$ is the Bargmann space
\begeq
\label{eq1.2}
H_{\Phi_0}(\comp^n) = L^2(\comp^n, e^{-2\Phi_0} L(dx)) \cap {\rm Hol}(\comp^n),
\endeq
where $L(dx)$ is the Lebesgue measure on $\comp^n$. We have the orthogonal projection
\begeq
\label{eq1.3}
\Pi_{\Phi_0}: L^2(\comp^n,e^{-2\Phi_0} L(dx)) \rightarrow H_{\Phi_0}(\comp^n).
\endeq

\bigskip
\noindent
Let $q$ be a complex valued quadratic form on $\comp^n_x$, i.e. a homogenous polynomial of degree 2 in $x$, $\overline{x}$. We shall assume that
\begeq
\label{eq1.4}
{\rm Re}\, q(x) < \Phi_{\rm herm}(x) := (1/2)\left(\Phi_0(x) + \Phi_0(ix)\right),\quad x \neq 0.
\endeq
In this work, following \cite{CoHiSj}, we shall be concerned with (densely defined) Toeplitz operators of the form
\begeq
\label{eq1.5}
{\rm Top}(e^q) = \Pi_{\Phi_0} \circ e^{q} \circ \Pi_{\Phi_0}: H_{\Phi_0}(\comp^n) \rightarrow H_{\Phi_0}(\comp^n).
\endeq
Such operators can be represented using the Weyl quantization,
\begeq
\label{eq1.6}
{\rm Top}(e^q) = a^w(x,D_x),
\endeq
see~\cite{Sj95},~\cite{Zw12}, where the Weyl symbol $a\in C^{\infty}(\Lambda_{\Phi_0})$ is given by
\begeq
\label{eq1.7}
a\left(x,\frac{2}{i}\frac{\partial \Phi_0}{\partial x}(x)\right) = \left(\exp\left(\frac{1}{4} \left(\Phi''_{0,x\overline{x}}\right)^{-1} \partial_x \cdot \partial_{\overline{x}}\right)e^q\right)(x).
\endeq
The following is the first main result of this work.

\begin{theo}
\label{theo_main1}
Let $\Phi_0$ be a strictly plurisubharmonic quadratic form on $\comp^n$ and let $q$ be a complex valued quadratic form on $\comp^n$. Assume that
\begeq
\label{eq1.8}
{\rm Re}\, q(x) < \Phi_{\rm herm}(x) := (1/2)\left(\Phi_0(x) + \Phi_0(ix)\right),\quad x \neq 0
\endeq
and
\begeq
\label{eq1.9}
{\rm det}\, \partial_x \partial_{\overline{x}} \left(2\Phi_0 - q\right) \neq 0.
\endeq
The Toeplitz operator
$$
{\rm Top}(e^q): H_{\Phi_0}(\comp^n) \rightarrow H_{\Phi_0}(\comp^n))
$$
is bounded if and only if the Weyl symbol $a\in C^{\infty}(\Lambda_{\Phi_0})$ of ${\rm Top}(e^q)$ satisfies $a\in L^{\infty}(\Lambda_{\Phi_0})$.
\end{theo}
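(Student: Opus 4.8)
Since the sufficiency half of the statement (boundedness of the Weyl symbol implies boundedness of $\mathrm{Top}(e^q)$) is already established in~\cite{CoHiSj}, the plan is to prove only the necessity: if $\mathrm{Top}(e^q)$ is bounded on $H_{\Phi_0}(\comp^n)$ then $a\in L^\infty(\Lambda_{\Phi_0})$. First I would make the Weyl symbol explicit. Since $q$ is quadratic, evaluating the heat‑type exponential in~\eqref{eq1.7}---which is meaningful precisely because of the non‑degeneracy hypothesis~\eqref{eq1.9} (this is exactly the condition that the associated Gaussian does not degenerate before ``time'' $1/4$)---shows that, under the identification $\Lambda_{\Phi_0}\simeq\comp^n_x$, one has $a=C_0\,e^{r}$ for a nonzero constant $C_0$ and a complex‑valued quadratic form $r$ in $(x,\overline x)$. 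As $r(tx)=t^2 r(x)$ for $t\in\real$, we have $a\in L^\infty(\Lambda_{\Phi_0})$ if and only if $\mathrm{Re}\,r\le 0$ on $\comp^n$, so the goal becomes: $\mathrm{Top}(e^q)$ bounded $\Rightarrow\mathrm{Re}\,r\le 0$.

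The second step is to test the operator against the reproducing kernels. Writing $e_w(x)=e^{2\Psi_0(x,\overline w)}$ with $\Psi_0$ the polarization of $\Phi_0$, so that $\|e_w\|^2=c_{\Phi_0}e^{2\Phi_0(w)}$, a short computation shows that the $x$‑quadratic part of $|e^q e_w|^2e^{-2\Phi_0}$ equals $2(\mathrm{Re}\,q-\Phi_{\mathrm{herm}})$, so that~\eqref{eq1.8} guarantees $e^q e_w\in L^2(\comp^n,e^{-2\Phi_0}L(dx))$; hence $\langle\mathrm{Top}(e^q)e_w,e_{w'}\rangle=\langle e^q e_w,e_{w'}\rangle$ is a convergent Gaussian integral over $\comp^n$, equal to $c_1\,e^{P(\overline w,w')}$ for some $c_1\neq 0$ and some quadratic form $P$, holomorphic in $\overline w$ and in $w'$. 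If $M:=\|\mathrm{Top}(e^q)\|<\infty$, then $|c_1|e^{\mathrm{Re}\,P(\overline w,w')}\le Mc_{\Phi_0}e^{\Phi_0(w)+\Phi_0(w')}$ for all $w,w'$; replacing $(w,w')$ by $(tw,tw')$, dividing by $t^2$ and letting $t\to\infty$ eliminates the additive constant and yields
\begin{equation*}
(\star)\qquad \mathrm{Re}\,P(\overline w,w')\le \Phi_0(w)+\Phi_0(w'),\qquad w,w'\in\comp^n .
\end{equation*}
It remains to show that $(\star)$ forces $\mathrm{Re}\,r\le 0$. Here the point is that $P$ and $r$ are both quadratic forms built from $(\Phi_0,q)$ and are linked by an explicit, and (under~\eqref{eq1.9}) invertible, linear correspondence: computing the same matrix element from the Weyl calculus, $\langle a^w(x,D_x)e_w,e_{w'}\rangle$ equals a convergent Gaussian integral of $a=C_0 e^{r}$ against the (Gaussian) cross‑Wigner kernel of the pair $(e_w,e_{w'})$, which identifies $P$ as a fixed Gaussian (Weierstrass‑type) transform of $r$; a suitable limiting choice of $(w,w')$ in $(\star)$ then extracts the pointwise bound $\mathrm{Re}\,r\le 0$.

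Equivalently, and more in the spirit of~\cite{CoHiSj}, I would argue semiclassically. The dilation $x\mapsto x/\sqrt h$ conjugates $\mathrm{Top}(e^q)$ on $H_{\Phi_0}$ to the semiclassical Toeplitz operator $\mathrm{Top}^h(e^{q/h})$ on $H^h_{\Phi_0}=L^2(\comp^n,e^{-2\Phi_0/h}L(dx))\cap\mathrm{Hol}(\comp^n)$, with the \emph{same} operator norm for every $h\in(0,1]$; by the homogeneity‑compatible version of~\eqref{eq1.7} its semiclassical Weyl symbol is $C_0\,e^{r/h}$. If $\mathrm{Re}\,r(v_0)>0$ for some $v_0$, one takes a normalized semiclassical coherent state $u_h$ localized at the point of $\Lambda_{\Phi_0}$ over $v_0$ and shows $\|\mathrm{Top}^h(e^{q/h})u_h\|\gtrsim e^{\mathrm{Re}\,r(v_0)/h}$, contradicting the $h$‑uniform bound. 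Either way one obtains $\mathrm{Re}\,r\le 0$, and combined with the converse from~\cite{CoHiSj} the two conditions turn out to be equivalent.

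The main obstacle is this last step, i.e.\ passing from the tested estimate back to pointwise control of $a$. In the matrix‑element formulation it is a finite‑dimensional but genuinely non‑routine linear‑algebra computation: Gaussian ``de‑smoothing'' does not preserve boundedness, so one must use $(\star)$ with $w\neq w'$ (the diagonal $w=w'$ is the Berezin transform, which is strictly weaker), and it is essential that~\eqref{eq1.9} keeps all the intervening quadratic forms non‑degenerate. In the semiclassical formulation the difficulty is of the same nature: the symbol $C_0 e^{r/h}$ is exponentially large and varies by a factor $e^{O(1/\sqrt h)}$ across the $\sqrt h$‑scale of a coherent state, so establishing the lower bound $\|\mathrm{Top}^h(e^{q/h})u_h\|\gtrsim e^{\mathrm{Re}\,r(v_0)/h}$ requires a careful stationary‑phase/Gaussian analysis rather than a soft symbolic estimate.
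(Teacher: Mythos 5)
Your outline of the first two steps matches the paper's argument closely: the paper also works with the action of ${\rm Top}(e^q)$ on normalized coherent states $k_w$ (after a harmless reduction to $\Phi_0$ with vanishing pluriharmonic part via a unitary conjugation by $u\mapsto ue^{-f}$), computes $({\rm Top}(e^q)k_w)(x)=C\,e^{2f(x,\overline w)-\Phi_0(w)}$ by exact stationary phase in $(y,\theta)$, evaluates $\norm{{\rm Top}(e^q)k_w}^2$ by the Laplace method, and from $\norm{{\rm Top}(e^q)}<\infty$ derives the estimate $2\,{\rm Re}\, f(x,\overline w)\le\Phi_0(x)+\Phi_0(w)$. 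Since $\langle{\rm Top}(e^q)e_w,e_{w'}\rangle\propto({\rm Top}(e^q)e_w)(w')\propto e^{2f(w',\overline w)}$, your inequality $(\star)$ is literally the same as the paper's (\ref{eq2.45}) under the relabelling $x=w'$. So up to this point the two proofs coincide.

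The genuine gap is the step you yourself flag as the ``main obstacle'': passing from $(\star)$ to ${\rm Re}\,r\le 0$ (equivalently $a\in L^\infty$). Neither of your two proposed routes is carried out, and neither is what the paper does. The paper avoids working with the Weyl symbol $a=Ce^{iF}$ directly and does not attempt a Gaussian ``de‑smoothing'' of $P$, which, as you note, does not in general preserve boundedness. Instead, Proposition \ref{prop_positive} reinterprets $(\star)$ in the language of positive canonical transformations: the off‑diagonal decay (\ref{eq2.45}) of the Bergman/Schwartz kernel is exactly, via \cite[Proposition 3.2]{CoHiSj}, the statement that the canonical transformation $\widetilde\kappa$ coming from the Bergman‑form phase $\varphi(x,y,z)=\tfrac{2}{i}\bigl(f(x,z)-\Psi_0(y,z)\bigr)$ is positive relative to $\Lambda_{\Phi_0}$; a direct computation (using the critical‑point equations for $f$ and the Schur‑complement argument in Proposition \ref{prop_det}, both relying on (\ref{eq1.9})) shows $\widetilde\kappa=\kappa$; and then \cite[Proposition B.1]{CoHiSj} converts positivity of $\kappa$ into $a\in L^\infty(\Lambda_{\Phi_0})$. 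This package---(\ref{eq2.45}) $\Rightarrow$ positivity of $\widetilde\kappa$, identification $\widetilde\kappa=\kappa$, positivity $\Leftrightarrow$ bounded Weyl symbol---is precisely the missing link, and without importing the CoHiSj machinery (or reproving its essential content) your argument stops just short of the conclusion. In particular, restricting to the diagonal $w=w'$ would only control the Berezin transform and is known to be insufficient; the paper's route via $\widetilde\kappa$ is what makes the off‑diagonal information usable.
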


\medskip
\noindent
{\it Remark}. As mentioned above, the sufficiency of the boundedness of the Weyl symbol of ${\rm Top}(e^q)$ for the boundedness of the Toeplitz operator has been established in~\cite{CoHiSj}, and here we shall only be concerned with the proof of the necessity. In the special case of Toeplitz symbols of the form $e^q$, where $q$ is a quadratic form on $\comp^n$ satisfying (\ref{eq1.8}), (\ref{eq1.9}), Theorem \ref{theo_main1} establishes the validity of the Berger-Coburn conjecture~\cite{BC94},~\cite{LC19}.

\bigskip
\noindent
The compactness of Toeplitz operators of the form (\ref{eq1.5}) can also be characterized in terms of their Weyl symbols, in agreement with a  general conjecture made in~\cite{BaCoIs}.

\begin{theo}
\label{theo_main2}
Let $\Phi_0$ be a strictly plurisubharmonic quadratic form on $\comp^n$ and let $q$ be a complex valued quadratic form on $\comp^n$, satisfying {\rm (\ref{eq1.8})}, {\rm (\ref{eq1.9})}. The Toeplitz operator
$$
{\rm Top}(e^q): H_{\Phi_0}(\comp^n) \rightarrow H_{\Phi_0}(\comp^n)
$$
is compact if and only if the Weyl symbol $a\in C^{\infty}(\Lambda_{\Phi_0})$ of ${\rm Top}(e^q)$ vanishes at infinity.
\end{theo}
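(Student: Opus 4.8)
\noindent\textbf{Proof strategy for Theorem~\ref{theo_main2}.}
By \eqref{eq1.7} together with the non-degeneracy assumption \eqref{eq1.9}, the Weyl symbol $a$ of ${\rm Top}(e^q)$ is a non-zero constant times the exponential of a quadratic form on $\Lambda_{\Phi_0}$; write $a=C\,e^{\widetilde q}$ with $C\neq 0$ and $\widetilde q$ a complex valued quadratic form on $\Lambda_{\Phi_0}\simeq\real^{2n}$. Since $|a(z)|=|C|e^{\Re\widetilde q(z)}$, one has the elementary equivalences $a\in L^{\infty}(\Lambda_{\Phi_0})\iff \Re\widetilde q\le 0$, and ($a$ vanishes at infinity) $\iff \Re\widetilde q<0$ (negative definite) $\iff a\in\mathcal S(\Lambda_{\Phi_0})$. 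The plan is to reduce Theorem~\ref{theo_main2} to these facts together with Theorem~\ref{theo_main1}. For the sufficiency ("if"), assume $a$ vanishes at infinity, so $a\in\mathcal S(\Lambda_{\Phi_0})$; transporting to $L^2(\real^n)$ by a Bargmann transform and using that the Weyl quantization of a Schwartz symbol is of trace class, we get that ${\rm Top}(e^q)=a^w$ is of trace class, in particular compact.

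For the necessity we argue by contraposition: assuming that $a$ does not vanish at infinity, we show that ${\rm Top}(e^q)$ is not compact. If $a\notin L^{\infty}(\Lambda_{\Phi_0})$, then ${\rm Top}(e^q)$ is already unbounded by Theorem~\ref{theo_main1}, hence not compact; so we may assume $\Re\widetilde q\le 0$ while $\Re\widetilde q$ fails to be negative definite, i.e. the null space $V=\{v\in\Lambda_{\Phi_0}:\Re\widetilde q(v)=0\}$ is a non-trivial linear subspace. The main tool is the group of unitary phase-space translations $W_v$, $v\in\Lambda_{\Phi_0}$, acting on $H_{\Phi_0}(\comp^n)$ (the Heisenberg/metaplectic action underlying the calculus in~\cite{CoHiSj}): these satisfy $W_v\to 0$ weakly as $v\to\infty$, together with the Weyl covariance relation $W_v^{*}\,a^w\,W_v=(a(\cdot+v))^w$. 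Consequently, if $a^w$ were compact, then $W_{v_j}^{*}\,a^w\,W_{v_j}=(a(\cdot+v_j))^w\to 0$ in the strong operator topology for every sequence $v_j\to\infty$. Now, for $w_0$ in the (complex bilinear) radical $\ker\widetilde q$ of $\widetilde q$ one has $a(\cdot+tw_0)=a$ identically, so $W_{tw_0}^{*}\,a^w\,W_{tw_0}=a^w$ for all $t$; letting $t\to\infty$ forces $a^w=0$, contradicting $a=Ce^{\widetilde q}\neq 0$. This already settles every case in which $\ker\widetilde q\neq\{0\}$ (equivalently, whenever $\Re\widetilde q$ vanishes on a line on which the whole of $\widetilde q$ vanishes).

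The remaining, and principal, case is $\ker\widetilde q=\{0\}$ with $\Re\widetilde q$ not negative definite. Here $\widetilde q|_V$ is a non-degenerate purely imaginary quadratic form, and for $0\neq w_0\in V$ one only gets $a(\cdot+tw_0)=e^{i\theta_t}\,e^{2it\ell_{w_0}}\,a$ with $\ell_{w_0}$ a non-trivial real linear form, so the translates differ from $a$ by a genuine chirp and the argument above does not close. The plan is to use the metaplectic/FIO structure from the theory behind Theorem~\ref{theo_main1}: since $\widetilde q$ is non-degenerate, $a^w$ is a generalized metaplectic operator associated to a complex linear canonical transformation, and — being bounded by Theorem~\ref{theo_main1} but with $\Re\widetilde q$ not negative definite — it is, up to conjugation by bounded invertible operators (a change of Bargmann weight $\Phi_0$ by a pluriharmonic quadratic, followed by a complex linear symplectic reduction to a normal form), unitarily equivalent to a tensor product of elementary Gaussian Weyl operators in which the factors corresponding to the null directions of $\Re\widetilde q$ are of dilation / Fourier-multiplier / rotation type; such a factor is bounded but has non-trivial essential spectrum (essential range of a non-zero Gaussian, resp.\ a circle), hence is not compact, and therefore neither is $a^w$. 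An alternative route is to pass to the positive operator $T={\rm Top}(e^q)^{*}{\rm Top}(e^q)=(\overline a\,\#\,a)^w$ (compact iff ${\rm Top}(e^q)$ is), whose symbol $\overline a\#a$ is again a non-zero constant times the exponential of a quadratic form $Q$ with $\Re Q\le 0$, and to show, using the explicit Gaussian Weyl product, that the chirp directions of $a$ become honest null directions of $Q$, so that $\ker Q\neq\{0\}$ and the preceding paragraph applies to $T$. In either approach the single step I expect to require real work is exactly this last one — verifying that in the "chirp" case the operator retains a non-trivial essential spectrum (equivalently, that reduction to normal form / passage to $T^{*}T$ produces a non-degenerate null direction of the symbol exponent); everything else is bookkeeping, and the degenerate situations where $\overline a\#a$ is not a non-degenerate Gaussian are handled by the same circle of ideas and are if anything simpler.
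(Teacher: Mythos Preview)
Your sufficiency argument matches the paper's (vanishing at infinity $\Rightarrow$ $a\in\mathcal S(\Lambda_{\Phi_0})$ $\Rightarrow$ trace class; the paper phrases this as strict positivity of $\kappa$ and invokes \cite[Proposition~3.3]{CoHiSj}). For the necessity your approach is genuinely different from the paper's and, by your own admission, incomplete: the ``chirp'' case (Case~2, where $\widetilde q$ is non-degenerate and $\Re\widetilde q\le 0$ with nontrivial null space) is where the real content lies, and neither of your two proposed resolutions is carried out. The normal-form route presupposes a structural decomposition of bounded metaplectic FIOs --- essentially a classification of positive but non-strictly-positive complex linear canonical transformations into tensor factors with identifiable essential spectrum --- which is not available off the shelf and would itself be the substance of the proof. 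The $\overline a\,\#\,a$ route requires checking that the chirp directions of $\widetilde q$ become honest radical directions of the exponent of the Moyal product; this is plausible (and holds in model cases such as $\widetilde q=i(x^2+\xi^2)$, where $(a^w)^*a^w$ is a scalar), but the Gaussian Moyal product mixes phase-space variables nontrivially and you have not verified the general claim. So as it stands this is a strategy with a genuine gap, not a proof.

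The paper sidesteps the case split altogether by testing compactness on the normalized reproducing kernels $k_w$: since $k_w\to 0$ weakly in $H_{\Phi_0}(\comp^n)$ as $|w|\to\infty$, compactness forces $\|{\rm Top}(e^q)k_w\|\to 0$. The key point is that this norm was already computed \emph{exactly} as a Gaussian integral in the proof of Theorem~\ref{theo_main1} (formula~\eqref{eq2.43}), and its decay to zero, combined with~\eqref{eq2.41}, is equivalent to the \emph{strict} inequality $2\Re f(x,\overline w)<\Phi_0(x)+\Phi_0(w)$ for $(x,w)\neq(0,0)$; a strict version of \cite[Proposition~3.2]{CoHiSj} then gives strict positivity of $\kappa$ relative to $\Lambda_{\Phi_0}$, hence vanishing of $a$ at infinity. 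The exact Gaussian evaluation of $\|{\rm Top}(e^q)k_w\|$ already absorbs all oscillatory effects, so no separate analysis of chirp directions is required.
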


\medskip
\noindent
{\it Remark}. While the Toeplitz symbols that we consider here, given by exponentials of complex quadratic forms, form a very restrictive class, they may still be of some interest since the class of the associated Toeplitz operators includes those that are "at the edge" of boundedness, with the unboundedness of the symbols attenuated by their rapid oscillations at infinity. See also~\cite{BC94},~\cite{CoHiSj}.

\medskip
\noindent
The plan of the note is as follows. In Section \ref{sec_boundedness}, we prove Theorem \ref{theo_main1}, and Theorem \ref{theo_main2} is then established in Section \ref{sec_compactness}. Following~\cite{CoHiSjWh}, the principal idea here is to let the (bounded) Toeplitz operator act on the space of normalized coherent states on the Bargmann space. It turns out that this allows one to extract directly the boundedness of the Weyl symbol, or rather the positivity of the corresponding complex linear canonical transformation, all the while relying crucially on the techniques developed in~\cite{CoHiSj}. Section \ref{sec_example} is devoted to the discussion of an explicit family of metaplectic Toeplitz operators on a model Bargmann space, illustrating Theorem \ref{theo_main1} and Theorem \ref{theo_main2}.

\medskip
\noindent
{\bf Acknowledgments}. We are very grateful to Joe Viola and Francis White for helpful discussions.

\medskip
\noindent
{\bf In Memoriam.}  Charles Arnold Berger (1937--2014) was professor of mathematics at CUNY.  Challenged by Crohn's Disease, he was tenacious and insightful in mathema\-ti\-cal research, with flashes of brilliance and humor.

\section{From bounded Toeplitz operators to bounded Weyl symbols: Proof of Theorem 1.1}
\label{sec_boundedness}
\setcounter{equation}{0}
Let $\Phi_0$ be a strictly plurisubharmonic quadratic form on $\comp^n$ and let $q$ be a complex valued quadratic form on $\comp^n$ satisfying (\ref{eq1.8}), (\ref{eq1.9}). We recall from~\cite{CoHiSj} that when equipped with the natural (maximal) domain
\begeq
\label{eq2.1}
{\cal D}({\rm Top}(e^q)) = \left\{u\in H_{\Phi_0}(\comp^n); e^{q}\,u \in L^2(\comp^n, e^{-2\Phi_0}L(dx))\right\},
\endeq
the Toeplitz operator
\begeq
\label{eq2.2}
{\rm Top}(e^{q}) = \Pi_{\Phi_0} \circ e^{q} \circ \Pi_{\Phi_0}: H_{\Phi_0}(\comp^n) \rightarrow H_{\Phi_0}(\comp^n)
\endeq
becomes densely defined.

\medskip
\noindent
Let $a\in C^{\infty}(\Lambda_{\Phi_0})$ be the Weyl symbol of ${\rm Top}(e^q)$, introduced in (\ref{eq1.7}) and let us recall that the implication $a\in L^{\infty}(\Lambda_{\Phi_0}) \Longrightarrow {\rm Top}(e^q)\in {\cal L}(H_{\Phi_0}(\comp^n),H_{\Phi_0}(\comp^n))$ has already been established in~\cite[Theorem 1.2]{CoHiSj}. We only need therefore to check the necessity of the condition $a\in L^{\infty}(\Lambda_{\Phi_0})$ for the boundedness of the Toeplitz operator.

\medskip
\noindent
When doing so, following~\cite{CoHiSj},~\cite{CoHiSjWh}, let us write for
$u\in {\cal D}({\rm Top}(e^q))$,
\begeq
\label{eq2.3}
{\rm Top}(e^{q})u(x) = C \int\!\!\!\int_{\Gamma} e^{2(\Psi_0(x,\theta) - \Psi_0(y,\theta)) + Q(y,\theta)} u(y)\, dy\, d\theta, \quad C\neq 0.
\endeq
Here $\Gamma$ is the contour in $\comp^{2n}_{y,\theta}$, given by $\theta = \overline{y}$, and $\Psi_0$ and $Q$ are the polarizations of $\Phi_0$ and $q$, respectively, i.e. holomorphic quadratic forms on $\comp^{2n}_{y,\theta}$ such that $\Psi_0|_{\Gamma} = \Phi_0$, $Q|_{\Gamma} = q$. Following~\cite{CoHiSj}, we shall view ${\rm Top}(e^q)$ in (\ref{eq2.3}) as a metaplectic Fourier integral operator in the complex domain. The holomorphic quadratic form
\begeq
\label{eq2.4}
F(x,y,\theta) = \frac{2}{i}\left(\Psi_0(x,\theta) - \Psi_0(y,\theta)\right) + \frac{1}{i}Q(y,\theta)
\endeq
is a non-degenerate phase function in the sense of H\"ormander~\cite{H_FIOI}, in view of the non-degeneracy of $\partial_x \partial_{\theta} \Psi_0$, and using also (\ref{eq1.9}) we conclude as in~\cite{CoHiSj}, that the associated canonical relation
\begeq
\label{eq2.5}
\kappa: \comp^{2n} \ni \left(y,-F'_y(x,y,\theta)\right) \mapsto \left(x,F'_x(x,y,\theta)\right)\in \comp^{2n},\quad F'_{\theta}(x,y,\theta) = 0,
\endeq
is the graph of a complex linear canonical transformation. It follows from~\cite[Proposition B.1]{CoHiSj} that the Weyl symbol $a$ of ${\rm Top}(e^q)$ satisfies $a\in L^{\infty}(\Lambda_{\Phi_0})$  precisely when the canonical transformation $\kappa$ in (\ref{eq2.5}) is positive relative to $\Lambda_{\Phi_0}$, i.e.,
\begeq
\label{eq2.6}
\frac{1}{i} \biggl(\sigma(\kappa(\rho), \iota_{\Phi_0} \kappa(\rho)) - \sigma(\rho, \iota_{\Phi_0}(\rho))\biggr) \geq 0,\quad \rho \in \comp^{2n}.
\endeq
Here $\iota_{\Phi_0}: \comp^{2n} \rightarrow \comp^{2n}$ is the unique anti-linear involution such that $\iota|_{\Lambda_{\Phi_0}} = 1$, see~\cite{CoHiSj}, and
\begeq
\label{eq2.7}
\sigma = \sum_{j=1}^n d\xi_j \wedge dx_j
\endeq
is the complex symplectic form on $\comp^{2n} = \comp^n_x \times \comp^n_{\xi}$.

\bigskip
\noindent
Assuming that
\begeq
\label{eq2.71}
{\rm Top}(e^q)\in {\cal L}(H_{\Phi_0}(\comp^n),H_{\Phi_0}(\comp^n)),
\endeq
let us check first that it suffices to show that $a\in L^{\infty}(\Lambda_{\Phi_0})$, when the pluriharmonic part of $\Phi_0$ vanishes. Indeed, let us decompose
\begeq
\label{eq2.8}
\Phi_0 = \Phi_{{\rm herm}} + \Phi_{{\rm plh}},
\endeq
where $\Phi_{{\rm herm}}(x) = (\Phi_0)''_{\overline{x}\, x}x\cdot \overline{x}$ is Hermitian positive definite, and
$\Phi_{{\rm plh}}(x) = {\rm Re} f(x)$, with $f(x) = (\Phi_0)''_{xx}x\cdot x$, is pluriharmonic. Let
\begeq
\label{eq2.9}
A = \frac{2}{i}\left(\Phi_0\right)''_{xx},
\endeq
and following~\cite{CoHiSj},~\cite{HiLaSjZe}, let us introduce the complex linear canonical transformation
\begeq
\label{eq2.10}
\kappa_A: \comp^{2n}\ni (y,\eta) \mapsto (y, \eta - Ay)\in \comp^{2n},
\endeq
satisfying
\begeq
\label{eq2.11}
\kappa_A(\Lambda_{\Phi_0}) = \Lambda_{\Phi_{{\rm herm}}}.
\endeq
Associated to $\kappa_A$ is the unitary metaplectic Fourier integral operator
\begeq
\label{eq2.12}
{\cal U}: H_{\Phi_0}(\comp^n) \ni u \mapsto u e^{-f}\in H_{\Phi_{\rm herm}}(\comp^n),
\endeq
and letting
\begeq
\label{eq2.13}
\Pi_{\Phi_{{\rm herm}}}: L^2(\comp^n, e^{-2\Phi_{{\rm herm}}}L(dx)) \rightarrow H_{\Phi_{{\rm herm}}}(\comp^n)
\endeq
be the orthogonal projection, we observe that
\begeq
\label{eq2.14}
\Pi_{\Phi_0} = {\cal U}^{-1} \circ \Pi_{\Phi_{{\rm herm}}} \circ {\cal U}.
\endeq
The assumption (\ref{eq2.71}) implies therefore that the Toeplitz operator
\begeq
\label{eq2.15}
\Pi_{\Phi_{{\rm herm}}} \circ e^{q} \circ  \Pi_{\Phi_{{\rm herm}}} = {\cal U} \circ {\rm Top}(e^q) \circ {\cal U}^{-1}:
H_{\Phi_{{\rm herm}}}(\comp^n) \rightarrow H_{\Phi_{{\rm herm}}}(\comp^n)
\endeq
is bounded, and arguing as above, we may regard the operator (\ref{eq2.15}) as a metaplectic Fourier integral operator associated to the complex linear canonical transformation
\begeq
\label{eq2.16}
\kappa_{{\rm herm}}: (y, -\partial_y F_{{\rm herm}}(x,y,\theta)) \mapsto (x, \partial_x F_{{\rm herm}}(x,y,\theta)),\quad \partial_{\theta} F_{{\rm herm}}(x,y,\theta) = 0.
\endeq
Here
\begeq
\label{eq2.17}
F_{{\rm herm}}(x,y,\theta) = \frac{2}{i}\left(\Psi_{{\rm herm}}(x,\theta) - \Psi_{{\rm herm}}(y,\theta)\right) + \frac{1}{i}Q(y,\theta),
\endeq
with $\Psi_{{\rm herm}}$ being the polarization of $\Phi_{{\rm herm}}$. Observing that
$$
F(x,y,\theta) = F_{{\rm herm}}(x,y,\theta) + \frac{1}{2}Ax\cdot x - \frac{1}{2}Ay\cdot y,
$$
we see that the canonical transformation $\kappa$ in (\ref{eq2.5}) admits the factorization
\begeq
\label{eq2.18}
\kappa = \kappa_A^{-1} \circ \kappa_{{\rm herm}} \circ \kappa_A.
\endeq
Combining (\ref{eq2.18}) with the fact that
\begeq
\label{eq2.19}
\iota_{\Phi_0} = \kappa_A^{-1} \circ \iota_{\Phi_{\rm herm}} \circ \kappa_A,
\endeq
see~\cite{CoHiSj}, we conclude that $\kappa$ is positive relative to $\Lambda_{\Phi_0}$, i.e. that (\ref{eq2.6}) holds, precisely when the canonical transformation $\kappa_{{\rm herm}}$ is positive relative to $\Lambda_{\Phi_{{\rm herm}}}$, i.e.,
\begeq
\label{eq2.20}
\frac{1}{i}\bigl(\sigma(\kappa_{{\rm herm}}(\rho),\iota_{\Phi_{\rm herm}}\kappa_{{\rm herm}}(\rho)) - \sigma(\rho, \iota_{\Phi_{\rm herm}}(\rho))\bigr)\geq 0,\quad \rho \in \comp^{2n}.
\endeq
In what follows, we shall assume therefore that the pluriharmonic part of $\Phi_0$ vanishes, so that
\begeq
\label{eq2.21}
\Phi_0(x) = (\Phi_0)''_{\overline{x}x}x\cdot \overline{x},\quad x\in \comp^n,
\endeq
and
\begeq
\label{eq2.22}
\Psi_0(x,y) = (\Phi_0)''_{\overline{x}x}x\cdot y,\quad x,y\in \comp^n.
\endeq

\bigskip
\noindent
We have
\begeq
\label{eq2.23}
2{\rm Re}\, \Psi_0(x,\overline{y}) - \Phi_0(x) - \Phi_0(y) = -(\Phi''_0)_{\overline{x}x}(x-y)\cdot (\overline{x-y}) = -\Phi_0(x-y)
\endeq
Assuming that (\ref{eq2.71}) holds, following~\cite{CoHiSjWh}, we shall examine the action of ${\rm Top}(e^q)$ on the space of "coherent states", i.e. the normalized reproducing kernels for the Bargmann space $H_{\Phi_0}(\comp^n)$. Let us set
\begeq
\label{eq2.24}
k_w(x) =  C_{\Phi_0}\, e^{2\Psi_0(x,\overline{w}) - \Phi_0(w)},\quad w\in \comp^n,
\endeq
Using (\ref{eq2.23}) and recalling (\ref{eq1.8}), we see that
\begeq
\label{eq2.24.1}
k_w \in {\cal D}({\rm Top}(e^q)), \quad w\in \comp^n,
\endeq
and choosing the constant $C_{\Phi_0} > 0$ suitably, we achieve that $\norm{k_w}_{H_{\Phi_0}({\bf C}^n)} = 1$, $w\in \comp^n$. We may write therefore, in view of (\ref{eq2.3}),
\begeq
\label{eq2.25}
\left({\rm Top}(e^{q})k_w\right)(x) = C\,C_{\Phi_0}\, e^{-\Phi_0(w)} \int\!\!\!\int_{\Gamma} e^{2\Psi_0(x,\theta) + Q(y,\theta) + 2\Psi_0(y,\overline{w}) - 2\Psi_0(y,\theta)} \,dy\,d\theta.
\endeq
Let us next make the following general observation.

\begin{prop}
\label{prop_polar}
Let $g$ be a complex valued quadratic form on $\comp^n$ such that ${\rm Re}\, g < 0$ in the sense of quadratic forms. Let $G$ be the polarization of $g$. Then the holomorphic quadratic form $G$ on $\comp^{2n}$ is non-degenerate.
\end{prop}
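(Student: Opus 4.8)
The plan is to pass to real coordinates and reduce the statement to an elementary fact about complex symmetric matrices. Identify $\comp^n$ with $\real^{2n}$ via $x = p + iq$, $p,q \in \real^n$, and regard $g$ as a $\comp$-valued quadratic form on $\real^{2n}$: there is a unique symmetric $2n\times 2n$ complex matrix $S$ with $g(p+iq) = w^{T}Sw$, where $w = (p,q) \in \real^{2n}$. Writing $S = S_1 + iS_2$ with $S_1,S_2$ real symmetric, the hypothesis ${\rm Re}\, g < 0$ says precisely that $S_1$ is negative definite, since for real $w$ one has ${\rm Re}\,(w^{T}Sw) = w^{T}S_1 w$.

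The first step is to show that $S$ is invertible over $\comp$. This is where the sign hypothesis enters, and the point one must be slightly careful about is that we only control the real part $S_1$ of $S$, so that isotropic vectors of the quadratic form $z \mapsto z^{T}Sz$ on $\comp^{2n}$ may well exist; nonetheless the kernel of $S$ is trivial. Indeed, if $S(a+ib) = 0$ with $a,b \in \real^{2n}$, then splitting into real and imaginary parts yields $S_1 a = S_2 b$ and $S_1 b = -S_2 a$; pairing the first identity with $a$ and the second with $b$ and using that $S_2$ is symmetric, the cross terms cancel and we obtain $a^{T}S_1 a + b^{T}S_1 b = 0$, which forces $a = b = 0$ since $S_1$ is negative definite. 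Hence $\det S \neq 0$.

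The second step transfers this to the holomorphic quadratic form $G$ on $\comp^{2n}$. Write $G(z) = z^{T}\mathcal{G}\,z$ for $z = (x,y) \in \comp^{2n}$, with $\mathcal{G}$ a symmetric $2n\times 2n$ complex matrix; non-degeneracy of $G$ means exactly $\det\mathcal{G}\neq 0$. The defining property of the polarization is $G(x,\overline x) = g(x)$ for all $x \in \comp^n$. Substituting $x = p+iq$, $\overline x = p-iq$, which expresses $(x,\overline x)$ as $Tw$ for an invertible complex $2n\times 2n$ matrix $T$ and $w = (p,q)$, and using that $T^{T}\mathcal{G}T$ is symmetric, we obtain $S = T^{T}\mathcal{G}T$; therefore $\det S = (\det T)^{2}\det\mathcal{G}$ with $\det T \neq 0$, and the conclusion $\det\mathcal{G}\neq 0$ follows from the first step.

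Equivalently, and closer to the spirit of the rest of the paper, one can observe that ${\rm Re}\, g < 0$ forces ${\rm Re}\, g(x) \leq -c|x|^{2}$ for some $c>0$ by homogeneity, so that $e^{g} \in L^{1}(\comp^n, L(dx))$ and the Gaussian integral $\int_{\comp^n} e^{g(x)}\, L(dx)$ is a nonzero constant multiple of $(\det\mathcal{G})^{-1/2}$ — the nonvanishing of this integral being exactly the non-degeneracy sought. There is no serious obstacle in either route; the only mildly delicate point is the one highlighted in the first step, namely that a definiteness condition on the real part of a complex symmetric matrix suffices to guarantee invertibility over $\comp$, which is settled by the short computation given there.
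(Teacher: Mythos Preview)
Your proof is correct but takes a genuinely different route from the paper's. The paper argues via the signature of the real quadratic form ${\rm Re}\, G$ on $\real^{4n}$: since $G$ is holomorphic, ${\rm Re}\, G(iz) = -{\rm Re}\, G(z)$, so ${\rm Re}\, G$ is negative definite on the anti-diagonal $\Gamma = \{y = \overline{x}\}$ and positive definite on $i\Gamma$, forcing signature $(2n,2n)$ and hence non-degeneracy of ${\rm Re}\, G$; the Cauchy--Riemann identity $\abs{\nabla {\rm Re}\, U} = \abs{\partial_z U}$ then transfers this to non-degeneracy of $G$ itself. You instead pass to real coordinates from the outset, reduce to the invertibility of a complex symmetric matrix $S$ whose real part is definite --- the short kernel computation you give is exactly the right substitute for a signature argument --- and then transport this to $\mathcal{G}$ via the linear change $(p,q)\mapsto (x,\overline{x})$. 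Both arguments are short; the paper's route stays inside the complex-analytic language (pluriharmonic forms, polarizations, Cauchy--Riemann) used throughout the rest of the paper, whereas yours is self-contained real linear algebra and arguably more transparent as a standalone lemma. Your Gaussian-integral remark at the end is a legitimate third perspective, though to make it fully rigorous one would need to justify the determinant formula for the complex Gaussian, which is essentially the content of the proposition.
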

\begin{proof}
The pluriharmonic quadratic form ${\rm Re}\, G$ satisfies
\begeq
\label{eq2.26}
{\rm Re}\, G|_{\Gamma} < 0,
\endeq
where $\Gamma \subset \comp^{2n}_{x,y}$ is the anti-diagonal, $y = \overline{x}$. It follows that the signature of ${\rm Re}\, G$ is $(2n, 2n)$ so that
${\rm Re}\, G$ is non-degenerate,
\begeq
\label{eq2.27}
\abs{\nabla {\rm Re}\, G(x,y)} \asymp \abs{x} + \abs{y} \quad \wrtext{on}\,\,\, \comp^{2n}_{x,y},
\endeq
where the gradient is taken in the real sense of $\real^{4n}$. The result follows in view of the following general fact: let $U\in {\rm Hol}(\comp^N_z)$. Then
\begeq
\label{2.28}
\abs{\nabla {\rm Re}\, U(z)} = 2\abs{\partial_z {\rm Re}\, U(z)} = \abs{\partial_z U(z)}.
\endeq
\end{proof}

\medskip
\noindent
An application of Proposition \ref{prop_polar} together with (\ref{eq1.8}) allows us to conclude that the holomorphic quadratic form
\begeq
\label{eq2.29}
\comp^{2n}_{y,\theta} \ni (y,\theta) \mapsto Q(y,\theta) - 2\Psi_0(y,\theta)
\endeq
is non-degenerate, and an application of the method of exact (quadratic) stationary phase~\cite[Lemma 13.2]{Zw12} to (\ref{eq2.25}) gives therefore, with a new constant $C$,
\begeq
\label{eq2.30}
\left({\rm Top}(e^{q})k_w\right)(x) = C\, e^{2f(x,\overline{w})- \Phi_0(w)}, \quad 0\neq C\in \comp.
\endeq
Here $f(x,z)$ is a holomorphic quadratic form on $\comp^{2n}_{x,z}$ given by
\begeq
\label{eq2.31}
2 f(x,z) = {\rm vc}_{y,\theta}\left(2 \Psi_0(x,\theta) + Q(y,\theta) + 2 \Psi_0(y,z) - 2\Psi_0(y,\theta)\right).
\endeq
Here we write "${\rm vc}$" for the critical value.

\medskip
\noindent
For future reference, let us make the following observation.
\begin{prop}
\label{prop_det}
We have
\begeq
\label{eq2.32}
{\rm det}\, f''_{xz} \neq 0.
\endeq
\end{prop}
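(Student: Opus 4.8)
The plan is to read off $f''_{xz}$ from the exact quadratic stationary phase computation that produced (\ref{eq2.30}), and then to identify the condition $\det f''_{xz}\neq 0$ with the hypothesis (\ref{eq1.9}) by an elementary block-matrix argument. Write the holomorphic quadratic form in (\ref{eq2.31}) as
$$
\Phi(x,z;y,\theta) = 2\Psi_0(x,\theta) + 2\Psi_0(y,z) + G(y,\theta), \qquad G(y,\theta) := Q(y,\theta) - 2\Psi_0(y,\theta),
$$
so that $2f(x,z) = {\rm vc}_{(y,\theta)}\,\Phi$. The key structural point is that $\Phi$ has no quadratic part in the outer variables $w = (x,z)$, and that $x$ is coupled only to $\theta$ (through $2\Psi_0(x,\theta)$) and $z$ only to $y$ (through $2\Psi_0(y,z)$), while the inner variables $v = (y,\theta)$ enter quadratically only through $G$. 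The Hessian $S := G''_{vv}$ of $G$ is non-degenerate, since the form $(y,\theta)\mapsto G(y,\theta)$ in (\ref{eq2.29}) is non-degenerate (Proposition \ref{prop_polar} together with (\ref{eq1.8})). We shall also use that the mixed Hessians $\partial_x\partial_\theta\Psi_0$ and $\partial_y\partial_z\Psi_0$ are non-degenerate; after the reduction (\ref{eq2.21})--(\ref{eq2.22}) both equal $(\Phi_0)''_{\overline{x}x}$ up to transposition, which is Hermitian positive definite.

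Carrying out the exact stationary phase in $(y,\theta)$ --- equivalently, using the standard Schur complement formula for the Hessian of a quadratic critical value, which simplifies here because the outer quadratic part of $\Phi$ vanishes --- one obtains that the Hessian of $2f$ in $w = (x,z)$ equals $-\,\Phi''_{wv}\,S^{-1}\,\Phi''_{vw}$, where $\Phi''_{vw}$ is the cross block of the Hessian of $\Phi$ with row index in $v = (y,\theta)$ and column index in $w = (x,z)$. By the structural point above, $\Phi''_{vw}$ is block anti-diagonal for the splittings $v = (y,\theta)$, $w = (x,z)$, its two nonzero blocks being nonzero multiples of $\partial_y\partial_z\Psi_0$ and $\partial_\theta\partial_x\Psi_0$. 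Reading off the $(x,z)$-block of $-\,\Phi''_{wv}\,S^{-1}\,\Phi''_{vw}$ and using this block structure, we find that $f''_{xz}$ has the form $N_1\,(S^{-1})_{\theta y}\,N_2$, with $N_1, N_2$ non-degenerate matrices obtained from $\partial_x\partial_\theta\Psi_0$ and $\partial_y\partial_z\Psi_0$, and $(S^{-1})_{\theta y}$ the $\theta y$-block of $S^{-1}$. Hence $\det f''_{xz}\neq 0$ if and only if $\det\,(S^{-1})_{\theta y}\neq 0$.

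It then remains to identify $\det\,(S^{-1})_{\theta y}\neq 0$ with (\ref{eq1.9}). Here I would invoke the elementary fact that, for an invertible block matrix $S$ with square diagonal blocks, the off-diagonal block $(S^{-1})_{\theta y}$ of the inverse is non-degenerate if and only if the block $S_{\theta y}$ is: a short computation gives $\ker\,(S^{-1})_{\theta y} = S_{yy}(\ker S_{\theta y})$, and $S_{yy}$ is injective on $\ker S_{\theta y}$, because $S_{yy}v = 0$ and $S_{\theta y}v = 0$ force $(v,0)\in\ker S = \{0\}$; thus $\dim\ker\,(S^{-1})_{\theta y} = \dim\ker S_{\theta y}$. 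Now $S_{\theta y} = \partial_\theta\partial_y G = \partial_\theta\partial_y(Q - 2\Psi_0)$, and since $Q - 2\Psi_0$ is the polarization of $q - 2\Phi_0$, the standard identification of the mixed Hessian of a polarization with the $\partial_x\partial_{\overline{x}}$-Hessian of the underlying quadratic form shows that $S_{\theta y}$ agrees, up to transposition, with $\partial_x\partial_{\overline{x}}(q - 2\Phi_0) = -\,\partial_x\partial_{\overline{x}}(2\Phi_0 - q)$. This is consistent with the reduction to vanishing pluriharmonic part of $\Phi_0$ made above, since $\partial_x\partial_{\overline{x}}$ annihilates pluriharmonic forms; by (\ref{eq1.9}) this matrix is non-degenerate, and therefore $\det\,(S^{-1})_{\theta y}\neq 0$, which gives $\det f''_{xz}\neq 0$.

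The step I expect to require the most care is the final one: checking that the $\theta y$-block of the Hessian of $G = Q - 2\Psi_0$ is exactly --- up to harmless signs and transpositions --- the matrix in (\ref{eq1.9}), and that passing from $S$ to $S^{-1}$ preserves non-degeneracy of this particular off-diagonal block. Neither is deep, but both call for careful bookkeeping with the $2\times 2$ block decomposition. Conceptually, however, the non-degeneracy of $f''_{xz}$ is simply the generating-function counterpart of the fact, already used above, that the hypothesis (\ref{eq1.9}) makes the canonical relation (\ref{eq2.5}) the graph of an honest linear canonical transformation.
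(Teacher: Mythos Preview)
Your proof is correct and follows essentially the same route as the paper's: both identify $f''_{xz}$ with a non-degenerate multiple of an off-diagonal block of the inverse of the inner Hessian (the paper via $f'_x = (\Phi_0)''_{x\overline{x}}\,\theta(x,z)$ and $\partial_z\theta \propto B_{22}$, you via the Hessian-of-critical-value formula $-\Phi''_{wv}S^{-1}\Phi''_{vw}$), and then deduce invertibility of that block from (\ref{eq1.9}) by a Schur-complement argument. The only cosmetic difference is that the paper invokes the Schur complement formula from~\cite{SjZw} for the implication ``$A_{11}$ invertible $\Rightarrow B_{22}$ invertible'', whereas you supply a self-contained kernel-dimension argument for the equivalent statement that $S_{\theta y}$ is invertible iff $(S^{-1})_{\theta y}$ is.
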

\begin{proof}
We have, in view of (\ref{eq2.31}) and (\ref{eq2.22}),
\begeq
\label{eq2.33}
2 f(x,z) = {\rm vc}_{y,\theta}\left(2(\Phi_0)''_{\overline{x}x}x\cdot \theta + Q(y,\theta) + 2(\Phi_0)''_{\overline{x}x}y\cdot z
- 2(\Phi_0)''_{\overline{x}x}y\cdot \theta\right),
\endeq
and letting $(y,\theta) = (y(x,z),\theta(x,z))\in \comp^n \times \comp^n$ be the unique critical point corresponding to the critical value in (\ref{eq2.33}), we see that
\begeq
\label{eq2.34}
f'_x(x,z) = (\partial_x \Psi_0)(x,\theta) = (\Phi_0)''_{x\overline{x}}\,\theta(x,z),\quad f''_{xz} = (\Phi_0)''_{x\overline{x}}\,
\partial_z \theta(x,z).
\endeq
We have to show that $\partial_z \theta(x,z)$ is invertible, and to this end we observe that the critical point $(y,\theta) = (y(x,z),\theta(x,z))$ satisfies
\begeq
\label{eq2.35}
2(\Phi_0)''_{\overline{x}x} y - Q'_{\theta}(y,\theta) = 2(\Phi_0)''_{\overline{x}x} x, \quad 2(\Phi_0)''_{x\overline{x}}\theta - Q'_y(y,\theta) =
2(\Phi_0)''_{x\overline{x}}z.
\endeq
Writing
\begeq
\label{eq2.36}
Q(y,\theta) = \frac{1}{2} Q''_{yy}y\cdot y + Q''_{y\theta}\theta\cdot y + \frac{1}{2} Q''_{\theta\,\theta}\theta \cdot \theta, \quad (y,\theta) \in \comp^n \times\comp^n,
\endeq
we see that the equations (\ref{eq2.35}) take the form,
\begeq
\label{eq2.37}
\begin{pmatrix}
A_{11} & A_{12} \\\
A_{21} & A_{22}
\end{pmatrix} \begin{pmatrix}
y \\\
\theta
\end{pmatrix} = \begin{pmatrix}
2(\Phi_0)''_{\overline{x}x}x \\\
2(\Phi_0)''_{x\overline{x}}z
\end{pmatrix}.
\endeq
Here the $2n\times 2n$ matrix
\begeq
\label{eq2.38}
{\cal A} = \begin{pmatrix}
A_{11} & A_{12} \\\
A_{21} & A_{22}
\end{pmatrix} = \begin{pmatrix}
2(\Phi_0)''_{\overline{x}x} - Q''_{\theta y} & -Q''_{\theta\,\theta} \\\
-Q''_{yy} & 2(\Phi_0)''_{x\overline{x}}- Q''_{y \theta}
\end{pmatrix}.
\endeq
is invertible, in view of the non-degeneracy of the quadratic form in (\ref{eq2.29}), and furthermore, $A_{11}$ is invertible, thanks to (\ref{eq1.9}). Letting
\begeq
\label{eq2.39}
{\cal B} = {\cal A}^{-1} = \begin{pmatrix}
B_{11} & B_{12} \\\
B_{21} & B_{22}
\end{pmatrix},
\endeq
we conclude that $B_{22}$ is invertible, in view of the Schur complement formula, see~\cite[Lemma 3.1]{SjZw}. It follows that $\partial_z \theta(x,z)$  is invertible, and (\ref{eq2.32}) follows, in view of (\ref{eq2.34}).
\end{proof}

\bigskip
\noindent
It follows from (\ref{eq2.24.1}) and (\ref{eq2.30}) that
\begeq
\label{eq2.40}
e^{2f(\cdot, \overline{w})} \in H_{\Phi_0}(\comp^n), \quad w\in \comp^n,
\endeq
and in particular, we infer from (\ref{eq2.40}) that
\begeq
\label{eq2.41}
2{\rm Re}\, f(x,0) - \Phi_0(x) < 0,\quad 0\neq x\in \comp^n.
\endeq
Using (\ref{eq2.30}) and writing
\begeq
\label{eq2.42}
\norm{{\rm Top}(e^q) k_w}^2_{H_{\Phi_0}({\bf C}^n)} = C^2 e^{-2\Phi_0(w)} \int e^{4{\rm Re}\, f(x,\overline{w}) - 2\Phi_0(x)}\,L(dx),
\endeq
we conclude, in view of (\ref{eq2.41}) and the quadratic version of stationary phase (the Laplace method)~\cite[Lemma 13.2]{Zw12} that
\begeq
\label{eq2.43}
\norm{{\rm Top}(e^q) k_w}^2_{H_{\Phi_0}({\bf C}^n)} = \widetilde{C}^2 e^{-2\Phi_0(w)} \exp\left({\rm sup}_x\left(4{\rm Re}\, f(x,\overline{w}) - 2\Phi_0(x)\right)\right),\quad \widetilde{C}\neq 0.
\endeq
We get therefore the following necessary condition for the boundedness of the Toeplitz operator ${\rm Top}(e^q)$ on $H_{\Phi_0}(\comp^n)$,
\begeq
\label{eq2.44}
{\rm sup}_x\left(4{\rm Re}\, f(x,\overline{w}) - 2\Phi_0(x)\right) - 2\Phi_0(w) \leq 0, \quad w\in \comp^n,
\endeq
or in other words,
\begeq
\label{eq2.45}
2{\rm Re}\, f(x,\overline{w}) \leq \Phi_0(x) + \Phi_0(w), \quad (x,w)\in \comp^n_x \times \comp^n_w.
\endeq

\medskip
\noindent
Theorem \ref{theo_main1} follows therefore from the following result.

\begin{prop}
\label{prop_positive}
Assume that the condition {\rm (\ref{eq2.45})} holds, where the holomorphic quad\-ra\-tic form $f(x,z)$ is given in {\rm (\ref{eq2.31})}. Then the Weyl symbol $a\in C^{\infty}(\Lambda_{\Phi_0})$ of the Toeplitz operator ${\rm Top}(e^q)$ satisfies $a\in L^{\infty}(\Lambda_{\Phi_0})$.
\end{prop}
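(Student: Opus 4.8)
The plan is to derive the positivity condition (\ref{eq2.6}) for the canonical transformation $\kappa$ directly from the quadratic inequality (\ref{eq2.45}); by \cite[Proposition B.1]{CoHiSj}, recalled above, this is exactly equivalent to $a\in L^{\infty}(\Lambda_{\Phi_0})$, which is what we want. The first step is to recognize what the holomorphic quadratic form $e^{2f(x,\overline{w})}$ is: combining (\ref{eq2.30}) with the reproducing formula $\langle u,k_w\rangle_{H_{\Phi_0}}=C_{\Phi_0}^{-1}e^{-\Phi_0(w)}u(w)$ for the normalized coherent states (\ref{eq2.24}), we see that $e^{2f(x,\overline{w})}$ is, up to a nonzero constant, the Schwartz kernel of ${\rm Top}(e^q)$ acting on $H_{\Phi_0}(\comp^n)$. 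Hence (\ref{eq2.45}) says precisely that this kernel is bounded in the natural weighted sense, $\abs{e^{2f(x,\overline{w})}}\leq C\,e^{\Phi_0(x)+\Phi_0(w)}$ on $\comp^n_x\times\comp^n_w$.

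The second step is to read off $\kappa$ from $f$ and reformulate the positivity. Since ${\rm det}\,f''_{xz}\neq 0$ by Proposition \ref{prop_det}, the holomorphic quadratic form $2f(x,z)$ on $\comp^{2n}_{x,z}$ is a non-degenerate phase, and the stationarity relations already worked out in the proof of Proposition \ref{prop_det} (see (\ref{eq2.34})--(\ref{eq2.35})) exhibit $2f(x,\overline{w})$ as the phase of the Schwartz kernel of the metaplectic Fourier integral operator ${\rm Top}(e^q)$, hence as a generating function for $\kappa$ in the coordinates on $\Lambda_{\Phi_0}$ given by $w\mapsto\rho_w:=(w,\frac 2i\frac{\partial\Phi_0}{\partial x}(w))$ on the source copy and $x$ on the target copy. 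Unwinding in these coordinates the anti-linear involution $\iota_{\Phi_0}$, the polarization $\Psi_0$ and the defect form $\rho\mapsto\frac 1i\big(\sigma(\kappa\rho,\iota_{\Phi_0}\kappa\rho)-\sigma(\rho,\iota_{\Phi_0}\rho)\big)$ appearing in (\ref{eq2.6}), one finds that the positivity of $\kappa$ relative to $\Lambda_{\Phi_0}$ is equivalent to the inequality $2\,{\rm Re}\,f(x,\overline{w})\leq\Phi_0(x)+\Phi_0(w)$ for all $(x,w)\in\comp^{2n}$, that is, to (\ref{eq2.45}). As a consistency check, restricting $\rho$ to $\Lambda_{\Phi_0}$ (i.e.\ taking $\rho=\rho_w$) the defect form reduces to ${\rm sup}_x\big(4\,{\rm Re}\,f(x,\overline{w})-2\Phi_0(x)\big)-2\Phi_0(w)$, exactly the quantity in (\ref{eq2.43})--(\ref{eq2.44}), and since by (\ref{eq2.41}) the form $(x,w)\mapsto\Phi_0(x)+\Phi_0(w)-2\,{\rm Re}\,f(x,\overline{w})$ is positive definite in $x$ for each fixed $w$, the estimate (\ref{eq2.45}) on all of $\comp^{2n}_{x,w}$ is in fact equivalent to (\ref{eq2.44}) for every $w$. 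Granting this reformulation, (\ref{eq2.45}) yields (\ref{eq2.6}), and Proposition \ref{prop_positive}, hence Theorem \ref{theo_main1}, follows.

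The crux — and the main obstacle — is precisely the second step: making explicit the linear isomorphism that transports the positivity defect form of $\kappa$ relative to $\Lambda_{\Phi_0}$ onto the quadratic form $\Phi_0(x)+\Phi_0(w)-2\,{\rm Re}\,f(x,\overline{w})$ on $\comp^{2n}_{x,w}$; equivalently, showing in this Gaussian setting that a metaplectic Fourier integral operator is bounded on $H_{\Phi_0}(\comp^n)$ exactly when its Schwartz kernel lies in the corresponding $L^{\infty}$ Bargmann space. This requires a careful bookkeeping of the interplay between the polarization $\Psi_0$, the involution $\iota_{\Phi_0}$, and the generating function of $\kappa$ produced by the phase $F$ in (\ref{eq2.4}); it is here that the non-degeneracy hypothesis (\ref{eq1.9}), entering through Proposition \ref{prop_det}, is essential, ensuring that $\kappa$ is a graph and that $2f(x,\overline{w})$ is a genuine non-degenerate generating function for it.
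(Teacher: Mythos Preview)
Your strategy is the same as the paper's, but what you have written is an outline rather than a proof: the decisive step --- that (\ref{eq2.45}) is equivalent to the positivity of $\kappa$ relative to $\Lambda_{\Phi_0}$ --- is asserted (``Unwinding \ldots one finds \ldots'') and then explicitly flagged as ``the crux --- and the main obstacle'', but is never carried out. The paper fills this gap in two concrete moves. First, it introduces the phase $\varphi(x,y,z)=\frac{2}{i}(f(x,z)-\Psi_0(y,z))$ and the associated canonical transformation $\widetilde{\kappa}$ (eqs.~(\ref{eq2.46})--(\ref{eq2.49})), and invokes \cite[Proposition 3.2]{CoHiSj}, which is precisely the statement that the kernel bound (\ref{eq2.45}) implies positivity of $\widetilde{\kappa}$ relative to $\Lambda_{\Phi_0}$. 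Second, it shows $\widetilde{\kappa}=\kappa$ by an explicit computation with the critical-point equations (eqs.~(\ref{eq2.50})--(\ref{eq2.55})), also noting that (\ref{eq2.49.3}) is the Bergman form of ${\rm Top}(e^q)$. Both pieces are exactly the ``careful bookkeeping'' you promise but do not supply; without them, you have not derived (\ref{eq2.6}) from (\ref{eq2.45}).

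A secondary issue: your ``consistency check'' claims that restricting the defect form to $\rho\in\Lambda_{\Phi_0}$ yields $\sup_x\bigl(4\,{\rm Re}\,f(x,\overline{w})-2\Phi_0(x)\bigr)-2\Phi_0(w)$. For $\rho=\rho_w\in\Lambda_{\Phi_0}$ one has $\iota_{\Phi_0}\rho_w=\rho_w$, so the second term in the defect vanishes and the defect is $\frac{1}{i}\sigma(\kappa\rho_w,\iota_{\Phi_0}\kappa\rho_w)$; identifying this with the supremum is itself a nontrivial assertion (essentially the content of \cite[Proposition 3.2]{CoHiSj} combined with $\widetilde{\kappa}=\kappa$), not a consistency check. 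Moreover, even if that identification held, nonnegativity of the defect on the real subspace $\Lambda_{\Phi_0}$ does not by itself imply nonnegativity on all of $\comp^{2n}$, so this would not shortcut the argument. In short: the plan is correct and matches the paper, but the proof is missing its two load-bearing computations.
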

\begin{proof}
We shall prove that the canonical transformation $\kappa$ in (\ref{eq2.5}) is positive relative to $\Lambda_{\Phi_0}$. To this end, let us set
\begeq
\label{eq2.46}
\varphi(x,y,z) = \frac{2}{i} \left(f(x,z) - \Psi_0(y,z)\right).
\endeq
The phase function $\varphi(x,y,z)$ is non-degenerate in the sense of H\"ormander, with $z\in \comp^n$ viewed as the fiber variables, and it follows from Proposition \ref{prop_det} that the canonical relation
\begeq
\label{eq2.47}
\widetilde{\kappa}: \comp^{2n} \ni \left(y,-\varphi'_y(x,y,z)\right) \mapsto \left(x,\varphi'_x(x,y,z)\right)\in \comp^{2n},\quad \varphi'_{z}(x,y,z) = 0,
\endeq
is the graph of a complex linear canonical transformation, see~\cite{CGHS}. We have more explicitly,
\begeq
\label{eq2.48}
\widetilde{\kappa}: \left(y, \frac{2}{i}\partial_y \Psi_0(y,z)\right) \mapsto \left(x,\frac{2}{i} \partial_x f(x,z)\right),\quad \partial_z f(x,z) = \partial_z \Psi_0(y,z),
\endeq
or equivalently, recalling (\ref{eq2.22}),
\begeq
\label{eq2.49}
\widetilde{\kappa}: \left(y, \frac{2}{i} (\Phi_0)''_{x\overline{x}}z\right)\mapsto \left(x,\frac{2}{i} f'_x(x,z)\right),\quad f'_z(x,z) =
(\Phi_0)''_{\overline{x}x}y.
\endeq
In the proof of~\cite[Proposition 3.2]{CoHiSj} it is explained how the condition (\ref{eq2.45}) implies that the canonical transformation $\widetilde{\kappa}$ is positive relative to $\Lambda_{\Phi_0}$, and we claim now that in fact, $\widetilde{\kappa} = \kappa$ in (\ref{eq2.5}). Indeed, let us recall from~\cite{HiSj15},~\cite{Sj95} that the orthogonal projection $\Pi_{\Phi_0}$ in (\ref{eq1.3}) is given by
\begeq
\label{eq2.49.1}
\Pi_{\Phi_0}u(x) = a_0 \int\!\!\!\int e^{2\Psi_0(x,\overline{y})} u(y)\, e^{-2\Phi_0(y)}\,dy\,d\overline{y}\, \quad a_0 \neq 0,
\endeq
and applying ${\rm Top}(e^q)$ to
\begeq
\label{eq2.49.2}
u(x) = \Pi_{\Phi_0}u(x) = a_0 \int\!\!\!\int e^{2\Psi_0(x,\overline{y})} u(y)\, e^{-2\Phi_0(y)}\,dy\,d\overline{y}, \quad u\in {\cal D}({\rm Top}(e^q)),
\endeq
we get recalling (\ref{eq2.24}), (\ref{eq2.30}),
\begin{multline}
\label{eq2.49.3}
{\rm Top}(e^q)u(x) = a_0 \int\!\!\!\int ({\rm Top}(e^q)k_y)(x)\, u(y)\, e^{-\Phi_0(y)}\, dy\,d\overline{y} \\
= C \int\!\!\!\int e^{2f(x,\overline{y})} u(y)  e^{-2\Phi_0(y)}\, dy\,d\overline{y} = C \int\!\!\!\int_{\Gamma} e^{2(f(x,z)- \Psi_0(y,z))} u(y)\, dy\, dz, \quad C\neq 0.
\end{multline}
The representation (\ref{eq2.49.3}) gives another way of expressing the Fourier integral operator ${\rm Top}(e^q)$ in (\ref{eq2.3}), using the phase function in (\ref{eq2.46}) --- in the terminology of~\cite{CoHiSj}, (\ref{eq2.49.3}) gives the Bergman form for the operator
${\rm Top}(e^q)\in {\cal L}(H_{\Phi_0}(\comp^n), H_{\Phi_0}(\comp^n))$. We expect therefore the canonical transformations (\ref{eq2.47}) and (\ref{eq2.5}) to be equal, and let us also verify this fact by a direct computation. Using (\ref{eq2.4}), (\ref{eq2.5}), and (\ref{eq2.22}), we see that the canonical transformation $\kappa$ is of the form
\begeq
\label{eq2.50}
\kappa: \left(y, \frac{2}{i} (\Phi_0)''_{x\overline{x}}\theta - \frac{1}{i}Q'_y(y,\theta)\right) \mapsto
\left(x, \frac{2}{i} (\Phi_0)''_{x\overline{x}}\theta\right), \quad 2(\Phi_0)''_{\overline{x}x}(x-y) + Q'_{\theta}(y,\theta) = 0,
\endeq
or in other words,
\begeq
\label{eq2.51}
\kappa: \left(y, \frac{2}{i} (\Phi_0)''_{x\overline{x}}\theta - \frac{1}{i}Q'_y(y,\theta)\right) \mapsto
\left(y - \frac{1}{2} \left((\Phi_0)''_{\overline{x}x}\right)^{-1}Q'_{\theta}(y,\theta), \frac{2}{i} (\Phi_0)''_{x\overline{x}}\theta\right).
\endeq
On the other hand, writing in view of (\ref{eq2.33}),
\begeq
\label{eq2.52}
2 f(x,z) = {\rm vc}_{\widetilde{y},\theta}\left(2(\Phi_0)''_{\overline{x}x}x\cdot \theta + Q(\widetilde{y},\theta) + 2(\Phi_0)''_{\overline{x}x}\widetilde{y}\cdot z - 2(\Phi_0)''_{\overline{x}x}\widetilde{y}\cdot \theta\right),
\endeq
we obtain that
\begeq
\label{eq2.53}
f'_x(x,z) = (\Phi_0)''_{x\overline{x}}\theta(x,z),\quad f'_z(x,z) = (\Phi_0)''_{\overline{x}x}\widetilde{y}(x,z),
\endeq
with $(\widetilde{y}(x,z),\theta(x,z))$ being the unique critical point corresponding to the critical value in (\ref{eq2.52}). We get therefore, using (\ref{eq2.49}) and (\ref{eq2.53}),
\begeq
\label{eq2.54}
\widetilde{\kappa}: \left(y, \frac{2}{i} (\Phi_0)''_{x\overline{x}}z\right) \mapsto
\left(x, \frac{2}{i}(\Phi_0)''_{x\overline{x}}\theta(x,z)\right), \quad \widetilde{y}(x,z) = y.
\endeq
Here, as we have already seen in (\ref{eq2.35}), the critical point $(\widetilde{y},\theta) = (\widetilde{y}(x,z),\theta(x,z))$ satisfies
\begeq
\label{eq2.55}
2(\Phi_0)''_{\overline{x}x}x = 2(\Phi_0)''_{\overline{x}x}\widetilde{y} - Q'_{\theta}(\widetilde{y},\theta),\quad
2(\Phi_0)''_{x\overline{x}}z = 2(\Phi_0)''_{x\overline{x}}\theta - Q'_{y}(\widetilde{y},\theta).
\endeq
Comparing (\ref{eq2.54}), (\ref{eq2.55}) with (\ref{eq2.51}), we conclude that $\widetilde{\kappa} = \kappa$, and therefore, the latter canonical transformation is positive relative to $\Lambda_{\Phi_0}$. The proof is complete.
\end{proof}

\section{Characterizing compact Toeplitz operators: Proof of Theorem 1.2}
\label{sec_compactness}
\setcounter{equation}{0}
In this section, we let $q$ be a complex valued quadratic form on $\comp^n$ satisfying (\ref{eq1.8}), (\ref{eq1.9}), for a given strictly plurisubharmonic quadratic form $\Phi_0$ on $\comp^n$. Our purpose here is to establish Theorem \ref{theo_main2}, and when doing so we shall first verify that the vanishing of the Weyl symbol at infinity is a sufficient condition for the compactness of the Toeplitz operator ${\rm Top}(e^q)$ on $H_{\Phi_0}(\comp^n)$. Indeed, this result has essentially been established in~\cite{CoHiSj}.

\medskip
\noindent
Recalling that the Weyl symbol $a$ of ${\rm Top}(e^q)$ is given by (\ref{eq1.7}), we observe, following~\cite{CoHiSj} that we can write
\begeq
\label{eq3.1}
a(x,\xi) = C\, e^{iF(x,\xi)},\quad (x,\xi)\in \Lambda_{\Phi_0},
\endeq
for some $C\neq 0$, where $F$ is a holomorphic quadratic form on $\comp^{2n}_{x,\xi}$. It follows that the vanishing of $a$ it infinity is equivalent to the ellipticity property
\begeq
\label{eq3.2}
{\rm Im}\, F\left(x,\frac{2}{i}\frac{\partial \Phi_0}{\partial x}(x)\right) \asymp \abs{x}^2,\quad x\in \comp^n,
\endeq
and an application of~\cite[Proposition B.1]{CoHiSj} shows that (\ref{eq3.2}) is equivalent to the fact that the canonical transformation $\kappa$ in (\ref{eq2.5}) is strictly positive relative to $\Lambda_{\Phi_0}$, so that
\begeq
\label{eq3.3}
\frac{1}{i} \biggl(\sigma(\kappa(\rho), \iota_{\Phi_0} \kappa(\rho)) - \sigma(\rho, \iota_{\Phi_0}(\rho))\biggr) > 0,\quad 0\neq \rho \in \comp^{2n}.
\endeq
An application of~\cite[Proposition 3.3]{CoHiSj} gives that the operator ${\rm Top}(e^q)$ is compact, and in fact of trace class, on $H_{\Phi_0}(\comp^n)$, with rapidly decaying singular values.

\medskip
\noindent
When proving the necessity of the vanishing of the Weyl symbol at infinity for the compactness of ${\rm Top}(e^q)$, we may equivalently establish the strict positivity of the canonical transformation $\kappa$ in (\ref{eq2.5}), and arguing as in Section \ref{sec_boundedness}, we may first reduce to the case when the pluriharmonic part of $\Phi_0$ vanishes. Proceeding next as in Section \ref{sec_boundedness}, we shall consider the action of the compact operator ${\rm Top}(e^q)$ on the space of coherent states $k_w$, $w\in \comp^n$, given by (\ref{eq2.24}). Let us first make the following well known observation, see~\cite{BaCoIs}.

\begin{lemma}
\label{weak_limit}
We have $k_w\rightarrow 0$ weakly in $H_{\Phi_0}(\comp^n)$, as $\abs{w}\rightarrow \infty$.
\end{lemma}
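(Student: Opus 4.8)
The plan is to use the fact that $\{k_w\}$ is a bounded family in $H_{\Phi_0}(\comp^n)$ (indeed $\norm{k_w} = 1$), so it suffices to test weak convergence against a dense subset. I would take the span of the reproducing kernels, or equivalently the monomials, as the dense set. Concretely, for $v \in H_{\Phi_0}(\comp^n)$, the inner product $\langle v, k_w\rangle_{H_{\Phi_0}}$ equals, up to the normalizing constant $C_{\Phi_0}$, the value $e^{-\Phi_0(w)}$ times the reproducing-kernel evaluation, which by the reproducing property of $H_{\Phi_0}$ is a constant multiple of $v(w)\, e^{-\Phi_0(w)}$ (more precisely $\overline{C_{\Phi_0}}\, v(w) e^{-\Phi_0(w)}$, after conjugating the antiholomorphic dependence in $k_w$). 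Thus $|\langle v, k_w\rangle| = |C_{\Phi_0}|\, |v(w)|\, e^{-\Phi_0(w)}$.

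The key step is then the pointwise bound $|v(w)| e^{-\Phi_0(w)} \to 0$ as $|w| \to \infty$, for every $v \in H_{\Phi_0}(\comp^n)$. This is a standard consequence of the reproducing kernel estimate in Bargmann-type spaces: since $\Phi_0$ is strictly plurisubharmonic, the reproducing kernel has Gaussian off-diagonal decay, and writing $v(w) e^{-\Phi_0(w)} = \langle v, k_w\rangle \cdot (\text{const})$ with $\norm{k_w}=1$, one gets $|v(w)| e^{-\Phi_0(w)} \le \text{const} \cdot \norm{v}$, i.e. the map $w \mapsto v(w) e^{-\Phi_0(w)}$ is bounded; to get decay, I would approximate $v$ in $H_{\Phi_0}$-norm by a finite linear combination $v_\varepsilon$ of normalized coherent states $k_{w_j}$ (these span a dense subspace), for which $|v_\varepsilon(w)| e^{-\Phi_0(w)}$ is an explicit finite sum of Gaussians $e^{2\Re \Psi_0(w,\overline{w_j}) - \Phi_0(w) - \Phi_0(w_j)} = e^{-\Phi_0(w - w_j)} \to 0$ as $|w|\to\infty$ by \eqref{eq2.23}, and then control $|v(w) - v_\varepsilon(w)| e^{-\Phi_0(w)} \le \text{const}\,\norm{v - v_\varepsilon}$ uniformly in $w$. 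Combining gives $\limsup_{|w|\to\infty} |v(w)| e^{-\Phi_0(w)} \le \text{const}\,\norm{v-v_\varepsilon}$, which is arbitrarily small.

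Since $|\langle v, k_w\rangle_{H_{\Phi_0}}| = |C_{\Phi_0}| \, |v(w)| e^{-\Phi_0(w)} \to 0$ for all $v$, and $\norm{k_w}=1$ is bounded, we conclude $k_w \rightharpoonup 0$ weakly. The only mildly delicate point is the uniform-in-$w$ bound $|v(w)| e^{-\Phi_0(w)} \le \text{const}\,\norm{v}_{H_{\Phi_0}}$ used in the density argument; this is exactly the statement that pointwise evaluation (weighted by $e^{-\Phi_0}$) is a bounded functional, which follows from Cauchy's integral formula together with the sub-mean-value property of $e^{2\Re(\text{holomorphic})}$ against the strictly plurisubharmonic weight $e^{-2\Phi_0}$ on a ball of fixed radius — a standard fact for Bargmann spaces which I would simply cite (e.g. from \cite{Sj95} or \cite{BaCoIs}). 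I expect no real obstacle here; the lemma is genuinely routine, which is why the text calls it "well known."
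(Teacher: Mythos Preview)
Your proposal is correct and follows essentially the same route as the paper: compute $(k_w,k_z)$ explicitly, observe via \eqref{eq2.23} that it is a Gaussian $e^{-\Phi_0(w-z)}\to 0$, extend by linearity to finite spans of coherent states, and use $\norm{k_w}=1$ to pass from a dense subspace to all of $H_{\Phi_0}(\comp^n)$. The only difference is that you take a small detour through the reproducing formula $\langle v,k_w\rangle \propto v(w)e^{-\Phi_0(w)}$ before specializing to $v=k_{w_j}$, whereas the paper simply computes $(k_w,k_z)$ directly and invokes the density argument in one line; your extra invocation of the uniform pointwise bound is exactly what makes that density argument work, so the two are the same proof at different levels of explicitness.
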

\begin{proof}
We have for some constant $C\neq 0$,
\begeq
\label{eq3.4}
(k_w,k_z)_{H_{\Phi_0}({\bf C}^n)} = C\, e^{2\Psi_0(z,\overline{w}) - \Phi_0(z) - \Phi_0(w)},\quad w,z\in \comp^n,
\endeq
and therefore, in view of (\ref{eq2.23}), we obtain that $(k_w,k_z)_{H_{\Phi_0}({\bf C}^n)}\rightarrow 0$ as $\abs{w}\rightarrow \infty$. Taking linear combinations of the $k_z$'s we get
\begeq
\label{eq3.5}
(k_w,g)_{H_{\Phi_0}({\bf C}^n)} \rightarrow 0,
\endeq
as $\abs{w}\rightarrow \infty$, for all $g$ in a dense subspace of $H_{\Phi_0}(\comp^n)$, and this implies the result.
\end{proof}

\medskip
\noindent
Lemma \ref{weak_limit} and the compactness of ${\rm Top}(e^q)$ shows that ${\rm Top}(e^q)k_w \rightarrow 0$ in $H_{\Phi_0}(\comp^n)$, as $\abs{w}\rightarrow \infty$, and using (\ref{eq2.43}), (\ref{eq2.41}) we obtain that
\begeq
\label{eq3.6}
2{\rm Re}\, f(x,\overline{w}) < \Phi_0(x) + \Phi_0(w), \quad (0,0)\neq (x,w) \in \comp^n_x \times \comp^n_w.
\endeq
The strict positivity of the canonical transformation $\kappa$ in (\ref{eq2.5}) relative to $\Lambda_{\Phi_0}$ follows now from (\ref{eq3.6}), Proposition \ref{prop_positive}, and a straightforward modification of~\cite[Proposition 3.2]{CoHiSj}. The proof of Theorem \ref{theo_main2} is complete.

\medskip
\noindent
{\it Remark}. Let us observe that~\cite[Proposition 3.2]{CoHiSj} is only concerned with positive cano\-ni\-cal transformations, and the issue of strict positivity is not addressed there expli\-citly. What is being used in the discussion above is therefore a natural analogue of~\cite[Proposition 3.2]{CoHiSj} in the strictly positive case, allowing one to conclude that the canonical transformation $\widetilde{\kappa}$ in (\ref{eq2.47}) is strictly positive relative to $\Lambda_{\Phi_0}$, provided that the strict inequality (\ref{eq3.6}) holds. A proof of such an analogue of~\cite[Proposition 3.2]{CoHiSj} in the strictly positive case is obtained by inspecting the proof
of~\cite[Proposition 3.2]{CoHiSj}, making also use of the natural analogue of~\cite[Theorem 2.1]{CoHiSj} in the strictly positive case --- see also~\cite[Proposition 1.2.8]{HiSj15}, where this result, giving a characterization of strictly positive Lagrangian planes, is stated explicitly.

\section{An explicit example}
\label{sec_example}
\setcounter{equation}{0}
The purpose of this section is to discuss the boundedness and compactness properties for an explicit class of metaplectic Toeplitz operators on the Bargmann $H_{\Phi_0}(\comp^n)$, for a model weight $\Phi_0$, illustrating Theorem \ref{theo_main1} and Theorem \ref{theo_main2} in this case. It will be assumed throughout this section that
\begeq
\label{eq4.1}
\Phi_0(x) = \frac{\abs{x}^2}{4},\quad x\in \comp^n,
\endeq
so that the polarization is given by
\begeq
\label{eq4.1.1}
\Psi_0(x,y) = \frac{1}{4}x\cdot y,\quad x,y\in \comp^n.
\endeq

\medskip
\noindent
Let $\lambda \in \comp$, let $A$ be a complex symmetric $n\times n$ matrix, and let us set
\begeq
\label{eq4.2}
q(x) = \lambda\abs{x}^2 + A\overline{x}\cdot \overline{x},\quad x\in \comp^n.
\endeq
We shall assume that
\begeq
\label{eq4.3}
\displaystyle {\rm Re}\,\lambda + \norm{A} < \frac{1}{4},
\endeq
where $\norm{A}$ is the Euclidean operator norm of $A: \comp^n \rightarrow \comp^n$. It follows, in particular, that (\ref{eq1.8}) holds, so that the Toeplitz operator ${\rm Top}(e^q)$ is densely defined on $H_{\Phi_0}(\comp^n)$. The assumption (\ref{eq1.9}) is also satisfied, and our purpose here is to illustrate Theorem \ref{theo_main1} and Theorem \ref{theo_main2}, by characterizing the boundedness and compactness of ${\rm Top}(e^q)$ in terms of the parameters $\lambda$ and $\norm{A}$.

\begin{theo}
\label{theo_example}
Let $\displaystyle \Phi_0(x) = \frac{\abs{x}^2}{4}$, $x\in \comp^n$. Let $\lambda \in \comp$ and let $A$ be an $n \times n$ complex symmetric matrix such that $\displaystyle {\rm Re}\, \lambda + \norm{A} < \frac{1}{4}$. Let us set $q(x) = \lambda\abs{x}^2 + A\overline{x}\cdot \overline{x}$, $x\in \comp^n$. The Toeplitz operator
$$
{\rm Top}(e^q): H_{\Phi_0}(\comp^n) \rightarrow H_{\Phi_0}(\comp^n)
$$
is bounded if and only if
\begeq
\label{eq4.4}
4 \norm{A} \leq \frac{1 - \abs{\gamma}^2}{\abs{\gamma}^2},\quad \gamma = \frac{1}{1-2\lambda}.
\endeq
Furthermore, ${\rm Top}(e^q)$ is compact on $H_{\Phi_0}(\comp^n)$ precisely when the inequality in {\rm (\ref{eq4.4})} is strict.
\end{theo}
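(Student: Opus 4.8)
The plan is to reduce everything to the analysis already carried out in Sections~\ref{sec_boundedness} and~\ref{sec_compactness}. By Theorem~\ref{theo_main1} and the discussion in Section~\ref{sec_boundedness}, the operator ${\rm Top}(e^q)$ is bounded on $H_{\Phi_0}(\comp^n)$ if and only if the quadratic inequality~(\ref{eq2.45}) holds for the holomorphic quadratic form $f$ of~(\ref{eq2.31}), and by Section~\ref{sec_compactness} it is compact if and only if the strict version~(\ref{eq3.6}) holds. Since here $\Phi_0(x)=|x|^2/4$ is already Hermitian (its pluriharmonic part vanishes), no preliminary reduction is needed, and the whole problem comes down to computing $f$ explicitly and then testing~(\ref{eq2.45}) and~(\ref{eq3.6}). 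One first checks, directly from~(\ref{eq4.3}), that~(\ref{eq1.8}) and~(\ref{eq1.9}) hold, so that the previous sections apply: indeed ${\rm Re}\, q(x)\le({\rm Re}\,\lambda+\norm{A})|x|^2<|x|^2/4=\Phi_{\rm herm}(x)$ for $x\neq 0$, while $\partial_x\partial_{\overline{x}}(2\Phi_0-q)=(1/2-\lambda)I$ is invertible because ${\rm Re}\,\lambda<1/4$.

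\medskip
\noindent
Next I would compute $f$. The polarization of $\Psi_0$ is $\Psi_0(x,y)=\frac14 x\cdot y$, see~(\ref{eq4.1.1}), and the polarization of $q(x)=\lambda\, x\cdot\overline{x}+A\overline{x}\cdot\overline{x}$ is $Q(y,\theta)=\lambda\, y\cdot\theta+A\theta\cdot\theta$. Substituting these into~(\ref{eq2.52}), with $(\Phi_0)''_{\overline{x}x}=\frac14 I$, the critical point equations become $(\lambda-\frac12)\theta+\frac12 z=0$ and $\frac12 x+(\lambda-\frac12)\widetilde{y}+2A\theta=0$; setting $\gamma=\frac{1}{1-2\lambda}$, which is well defined and nonzero since ${\rm Re}\,\lambda<1/4$, the unique critical point is $\theta(x,z)=\gamma z$, $\widetilde{y}(x,z)=\gamma x+4\gamma^2 Az$, the non-degeneracy that makes the exact quadratic stationary phase applicable being precisely Proposition~\ref{prop_polar} applied to~(\ref{eq2.29}). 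Since for a quadratic form with no constant term the critical value equals one half of the linear part evaluated at the critical point, this gives
\begeq
\label{eqplanf}
f(x,z)=\frac14\,\gamma\, x\cdot z+\frac12\,\gamma^2\, Az\cdot z.
\endeq

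\medskip
\noindent
I would then feed~(\ref{eqplanf}) into~(\ref{eq2.45}), which reads ${\rm Re}\bigl(\frac12\gamma\, x\cdot\overline{w}+\gamma^2 A\overline{w}\cdot\overline{w}\bigr)\le\frac14(|x|^2+|w|^2)$ for all $x,w\in\comp^n$. Maximizing the left-hand side in $x$ for fixed $w$ is elementary: $\sup_x\bigl({\rm Re}(\frac12\gamma\, x\cdot\overline{w})-\frac14|x|^2\bigr)=\frac{|\gamma|^2}{4}|w|^2$, attained at $x=\overline{\gamma}\,w$, so~(\ref{eq2.45}) is equivalent to ${\rm Re}(\gamma^2 Au\cdot u)\le\frac{1-|\gamma|^2}{4}|u|^2$ for all $u\in\comp^n$ (take $u=\overline{w}$). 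Since $\gamma^2 A$ is a complex symmetric matrix, its Takagi (Autonne) factorization $\gamma^2 A=U\Sigma U^{T}$, with $U$ unitary and $\Sigma={\rm diag}(\sigma_1,\dots,\sigma_n)$, $\sigma_j\ge 0$ the singular values, gives $\gamma^2 Au\cdot u=\sum_j\sigma_j v_j^2$ with $v=U^{T}u$, $|v|=|u|$, whence $\sup_{|u|=1}{\rm Re}(\gamma^2 Au\cdot u)=\max_j\sigma_j=\norm{\gamma^2 A}=|\gamma|^2\norm{A}$. Therefore~(\ref{eq2.45}) holds if and only if $|\gamma|^2\norm{A}\le\frac{1-|\gamma|^2}{4}$, i.e. $4\norm{A}\le\frac{1-|\gamma|^2}{|\gamma|^2}$, which is exactly~(\ref{eq4.4}); this establishes the boundedness assertion.

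\medskip
\noindent
For compactness I would rerun the same computation, tracking strictness. With~(\ref{eqplanf}), condition~(\ref{eq3.6}) says that the quadratic form $\frac14|x|^2-\frac12{\rm Re}(\gamma\, x\cdot\overline{w})+\frac14|w|^2-{\rm Re}(\gamma^2 A\overline{w}\cdot\overline{w})$ is positive definite on $\comp^n_x\times\comp^n_w$; minimizing in $x$ (the minimum $-\frac{|\gamma|^2}{4}|w|^2$ is attained at $x=\overline{\gamma}\,w$) reduces this to $\frac{1-|\gamma|^2}{4}|w|^2-{\rm Re}(\gamma^2 A\overline{w}\cdot\overline{w})>0$ for all $w\neq 0$ (positivity at $w=0$ being automatic), which by the spectral computation above is exactly $|\gamma|^2\norm{A}<\frac{1-|\gamma|^2}{4}$, i.e. strict inequality in~(\ref{eq4.4}); conversely, when~(\ref{eq4.4}) is an equality, taking $\overline{w}$ a Takagi singular vector of $\gamma^2 A$ realizing the norm, with phase normalized so that $\gamma^2 A\overline{w}\cdot\overline{w}=\norm{\gamma^2 A}\,|w|^2$, and $x=\overline{\gamma}\,w$, exhibits a nonzero pair where~(\ref{eq3.6}) fails. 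Hence ${\rm Top}(e^q)$ is compact if and only if the inequality in~(\ref{eq4.4}) is strict. The one step I expect to require a genuine argument rather than bookkeeping is the spectral identity $\sup_{|u|=1}{\rm Re}(Mu\cdot u)=\norm{M}$ for a complex symmetric $M$ (note $Mu\cdot u=u^{T}Mu$ is the bilinear, not the Hermitian, pairing, so this is not the usual Rayleigh quotient and really relies on the Takagi factorization); the stationary-phase evaluation of $f$ and the two optimizations in $x$ are routine.
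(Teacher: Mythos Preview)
Your proof is correct, but it takes a different route from the paper's. The paper works directly with the canonical transformation $\kappa$ of~(\ref{eq2.5}): it computes $\kappa$ explicitly from the phase $F$ in~(\ref{eq4.6}), obtaining $\kappa(y,\eta)=(\gamma^{-1}y-8i\gamma A\eta,\gamma\eta)$, and then tests the positivity condition~(\ref{eq2.6}) using the explicit form of $\iota_{\Phi_0}$. This produces a two-variable quadratic inequality in $(y,\eta)$ which, after completing a square, reduces to $c|\eta|^2\ge(|a|^2-b)|A\eta|^2$ for suitable constants, and hence to~(\ref{eq4.4}). You instead exploit the equivalence, already built into Sections~\ref{sec_boundedness}--\ref{sec_compactness}, between boundedness (resp.\ compactness) and the kernel inequality~(\ref{eq2.45}) (resp.~(\ref{eq3.6})); you compute the Bergman-form phase $f$ explicitly, optimize over $x$ first, and then invoke the Takagi factorization of the complex symmetric matrix $\gamma^2 A$ to identify $\sup_{|u|=1}{\rm Re}(\gamma^2 Au\cdot u)$ with $|\gamma|^2\norm{A}$.

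Each approach has its merits. The paper's computation is self-contained at the level of the symplectic geometry and avoids any matrix factorization lemma, at the cost of slightly heavier algebra in the $(y,\eta)$ variables. Your approach is shorter and makes transparent where the operator norm of $A$ enters, but it relies on the Takagi/Autonne factorization for complex symmetric matrices --- a step you correctly flag as the only nontrivial ingredient, since $u^{T}Mu$ is not the Hermitian Rayleigh quotient and the identity $\sup_{|u|=1}{\rm Re}(u^{T}Mu)=\norm{M}$ genuinely uses the symmetry of $M$. Your verification that~(\ref{eq1.8}) and~(\ref{eq1.9}) hold, the stationary-phase evaluation of $f$, and the strictness analysis for compactness are all in order.
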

\begin{proof}
We shall first discuss the boundedness issue. It suffices, in view of Theorem \ref{theo_main1}, to show that the condition (\ref{eq4.4}) is satisfied precisely when the Weyl symbol of the operator ${\rm Top}(e^q)$ is bounded along $\Lambda_{\Phi_0}$. Here, rather than computing the Weyl symbol of ${\rm Top}(e^{q})$ by evaluating a suitable Gaussian integral, cf. (\ref{eq1.7}), it will be more convenient to show directly that the complex linear canonical transformation
\begeq
\label{eq4.5}
\kappa: \left(y,-F'_y(x,y,\theta)\right) \mapsto \left(x,F'_x(x,y,\theta)\right),\quad F'_{\theta}(x,y,\theta) = 0,
\endeq
where
\begeq
\label{eq4.6}
F(x,y,\theta) = \frac{2}{i}\left(\Psi_0(x,\theta) - \Psi_0(y,\theta)\right) + \frac{1}{i}Q(y,\theta) = \frac{1}{i}\left(\frac{(x-y)\cdot\theta}{2} + \lambda y\cdot \theta + A\theta\cdot \theta\right),
\endeq
is positive relative to $\Lambda_{\Phi_0}$.

\medskip
\noindent
The critical manifold of the non-degenerate phase function $F(x,y,\theta)$ in (\ref{eq4.6}) is given by $F'_{\theta}(x,y,\theta) = 0 \Longleftrightarrow x = (1-2\lambda)y - 4A\theta$, and a simple computation using (\ref{eq4.5}), (\ref{eq4.6}) shows that the canonical transformation $\kappa$ is given by
\begeq
\label{eq4.7}
\kappa: \comp^{2n} \ni (y,\eta) \mapsto \left((1-2\lambda)y - \frac{8iA\eta}{1-2\lambda},\frac{\eta}{1-2\lambda}\right) = \left(\frac{y}{\gamma} - 8i\gamma A\eta, \gamma \eta\right) \in \comp^{2n}.
\endeq
It follows from~\cite[equation (2.4)]{CoHiSj} that the anti-linear involution $\iota_{\Phi_0}: \comp^{2n} \rightarrow \comp^{2n}$ fixing $\Lambda_{\Phi_0}$, is given by
\begeq
\label{eq4.8}
\iota_{\Phi_0}: (y,\eta) \mapsto \left(\frac{2\overline{\eta}}{i}, {\frac{\overline{y}}{2i}}\right),
\endeq
and we have therefore,
\begeq
\label{eq4.9}
\frac{1}{i} \sigma\left((y,\eta),\iota_{\Phi_0}(y,\eta)\right) = \frac{1}{i} \sigma\left((y,\eta), \left(\frac{2\overline{\eta}}{i}, {\frac{\overline{y}}{2i}}\right)\right) = \frac{1}{2}\abs{y}^2 - 2\abs{\eta}^2.
\endeq
Recalling (\ref{eq2.6}), we conclude that the canonical transformation $\kappa$ in (\ref{eq4.5}) is positive relative to $\Lambda_{\Phi_0}$ precisely when we have
\begeq
\label{eq4.10}
\abs{\frac{y}{\gamma} - 8i\gamma A\eta}^2 - \abs{y}^2 + 4\left(1-\abs{\gamma}^2\right)\abs{\eta}^2 \geq 0,\quad (y,\eta)\in \comp^{2n},
\endeq
or in other words,
\begeq
\label{eq4.11}
\left(\frac{1 - \abs{\gamma}^2}{\abs{\gamma}^2}\right)\abs{y}^2 + 16 {\rm Re}\, \left(i\frac{\overline{\gamma}}{\gamma} \overline{A}\overline{\eta}\cdot y\right) + 64 \abs{\gamma}^2\abs{A\eta}^2 + 4\left(1-\abs{\gamma}^2\right)\abs{\eta}^2 \geq 0,\quad (y,\eta)\in \comp^{2n}.
\endeq
It is now elementary to check that the positivity property (\ref{eq4.11}) is implied by (\ref{eq4.4}), and when doing so we may assume that the matrix $A$ is non-vanishing, so that $\abs{\gamma} < 1$. The property (\ref{eq4.11}) may therefore be equivalently rewritten as follows,
\begeq
\label{eq4.12}
\abs{y}^2 + 2{\rm Re}\, \left(i\overline{a}\, \overline{A}\overline{\eta}\cdot y\right) + b\abs{A\eta}^2 + c \abs{\eta}^2 \geq 0, \quad (y,\eta)\in \comp^{2n},
\endeq
or in other words,
\begeq
\label{eq4.13}
\abs{y - ia A\eta}^2 + \left(b - \abs{a}^2\right) \abs{A\eta}^2 + c\abs{\eta}^2 \geq 0, \quad (y,\eta)\in \comp^{2n}.
\endeq
Here
\begeq
\label{eq4.14}
a = \frac{8 \abs{\gamma}^2}{1-\abs{\gamma}^2} \frac{\gamma}{\overline{\gamma}}, \quad b = \frac{64 \abs{\gamma}^4}{1-\abs{\gamma}^2}>0, \quad
c = 4\abs{\gamma}^2>0.
\endeq
Now (\ref{eq4.13}) holds precisely when
\begeq
\label{eq4.15}
c \abs{\eta}^2 \geq \left(\abs{a}^2 - b\right)\abs{A\eta}^2, \quad \eta \in \comp^n,
\endeq
and observing that, in view of (\ref{eq4.14}),
$$
\abs{a}^2 - b = \frac{64 \abs{\gamma}^6}{\left(1-\abs{\gamma}^2\right)^2},
$$
we immediately conclude that (\ref{eq4.15}) follows from (\ref{eq4.4}). Similar arguments show that the condition (\ref{eq4.4}) is also necessary for the positivity of the canonical transformation $\kappa$ in (\ref{eq4.5}) relative to $\Lambda_{\Phi_0}$, so that (\ref{eq4.4}) holds precisely when the Weyl symbol of the operator ${\rm Top}(e^{q})$ is bounded. The compactness of ${\rm Top}(e^q)$ can be characterized in a similar way, as it is equivalent to the strict positivity of the canonical transformation $\kappa$ in (\ref{eq4.5}), relative to $\Lambda_{\Phi_0}$.
\end{proof}


\begin{thebibliography}{10}

\bibitem{BaCoIs} W. Bauer, L. A. Coburn, and J. Isralowitz, {\it Heat flow, BMO, and the compactness of Toeplitz operators},
J. Funct. Anal. {\bf 259} (2010), 57–-78.

\bibitem{BC94} C. A. Berger and L. A. Coburn, {\it Heat flow and Berezin-Toeplitz estimates}, Amer. J. Math. {\bf 116} (1994), 563–-590.

\bibitem{CGHS} E. Caliceti, S. Graffi, M. Hitrik, and J. Sj\"ostrand, {\it Quadratic PT-symmetric operators with real spectrum and similarity to selfadjoint operators}, Journal of Physics A, 2012.

\bibitem{LC19} L. A. Coburn, {\it Fock space, the Heisenberg group, heat flow and Toeplitz operators}, Chapter 1 (pp. 1--15) in {\it Handbook of analytic operator theory}, ed. Kehe Zhu, CRC Press, 2019.

\bibitem{CoHiSj} L. Coburn, M. Hitrik, and J. Sj\"ostrand, {\it Positivity, complex FIOs, and Toeplitz operators},
Pure Appl. Anal. {\bf 1} (2019), 327–-357.

\bibitem{CoHiSjWh} L. Coburn, M. Hitrik, J. Sj\"ostrand, and F. White, {\it Weyl symbols and boundedness of Toeplitz operators}, Math. Res. Letters, {\bf 28} (2021), 681--696.


\bibitem{HiLaSjZe} M. Hitrik, R. Lascar, J. Sj\"ostrand, and M. Zerzeri, {\it Semiclassical Gevrey opera\-tors in the complex domain}, Ann. Inst. Fourier, to appear.

\bibitem{HiSj15} M.~Hitrik and J.~Sj\"ostrand, \emph{Two minicourses on analytic microlocal analysis},
"Algebraic and Analytic Microlocal Analysis", Springer Proceedings in Mathematics and Statistics {\bf 269} (2018), 483--540.

\bibitem{H_FIOI} L. H\"ormander, {\it Fourier integral operators {\rm I}}, Acta Math. {\bf 127} (1971), 79–-183.


\bibitem{Sj95} J. Sj\"ostrand, {\it Function spaces associated to global I-Lagrangian manifolds}, Structure of solutions of differential
equations, Katata/Kyoto, 1995, World Sci. Publ., River Edge, NJ (1996).

\bibitem{SjZw} J. Sj\"ostrand and M. Zworski, {\it Elementary linear algebra for advanced spectral problems},
Ann. Inst. Fourier {\bf 57} (2007), 2095–-2141.

\bibitem{Zw12} M. Zworski, {\it Semiclassical Analysis}, AMS, 2012.




\end{thebibliography}
\end{document}